\renewcommand{\det}{{\rm det}}
\newcommand{\ep}{{\epsilon}}
\newcommand{\de}{{\delta}}
\newcommand{\ind}{{\rm ind}}
\renewcommand{\vert}{\,|\,}
 \newcommand{\tbw}{\textstyle\bigwedge}
\newcommand{\zed}{{\mathbb Z}}
\newcommand{\Ext}{{\rm Ext}}
\newcommand{\End}{{\rm End}}
\newcommand{\Hom}{{\rm Hom}}
\newcommand{\cf}{{\rm cf}}
\newcommand{\soc}{{\rm soc}}
\renewcommand{\mod}{{\rm mod}}
\newcommand{\Map}{{\rm Map}}
\newcommand{\GL}{{\rm GL}}
\newcommand{\q}{\quad}
\newcommand{\St}{{\rm St}}
\newcommand{\hatZ}{{\hat Z}}
\newcommand{\hatL}{{\hat L}}
\newcommand{\hatQ}{{\hat Q}}
\newcommand{\barlambda}{{\bar \lambda}}
\newcommand{\barmu}{{\bar \mu}}
\newcommand{\Tr}{{\rm Tr}}
\newcommand{\padic}{{p-{\rm adic}}}
\newcommand{\bs}{\bigskip}
\newcommand{\id}{{\rm id}}
\newcommand{\opp}{{\rm opp}}
\renewcommand{\inf}{{\rm inf}}
\newcommand{\barB}{{\bar B}}
\newcommand{\barG}{{\bar G}}
\newcommand{\barnabla}{{\overline \nabla}}
\newcommand{\barL}{{\bar L}}
\newcommand{\sgn}{{\rm sgn}}
\newcommand{\Sym}{{\rm Sym}}
\newcommand{\ch}{{\rm ch\,}}
\renewcommand{\makeindex}{Index}
\newtheorem{definition}{Definition}[section]
\newtheorem{proposition}[definition]{Proposition}
\newtheorem{theorem}[definition]{Theorem}
\newtheorem{lemma}[definition]{Lemma}
\newtheorem{corollary}[definition]{Corollary}
\newtheorem{remark}[definition]{Remark}
\begin{document}

\parindent=0pt

\centerline{\bf Endomorphism Algebras of Some Modules for Schur Algebras}
\centerline{\bf  and Representation Dimension}

\bigskip

\centerline{Stephen Donkin and Haralampos Geranios}

\bigskip

{\it Department of Mathematics, University of York, York YO10 5DD}

{\tt stephen.donkin@york.ac.uk and }\\
{\tt   haralampos.geranios@york.ac.uk}

\bigskip

\centerline{11 February 2013}

\bigskip \bigskip

\section*{Abstract}

We consider the representation dimension, for fixed $n\geq2$, of ordinary and quantised Schur algebras $S(n,r)$ over a field $k$. For $k$ of positive characteristic $p$ we give a lower bound valid for all $p$. We also give an upper bound in the quantum case, when $k$ has characteristic $0$.  

\section{Introduction}

\quad In a recent paper \cite{MO}, Miemietz and Oppermann describe a lower bound for the representation dimension of Schur algebras $S(n,r)$ over a field of characteristic $p>0$. This bound is stated under the condition $p\geq 2n-1$.  We here give a bound valid for all $p$. Our main interest is in the classical case so the main body of the text is expressed in the classical context. However, we point out that our results are valid also in the quantum case in positive characteristic.  We also point out that for fixed $n$ there is an upper bound of the representation dimension of the $q$-Schur algebras $S_q(n,r)$ valid for all $r$ in the characteristic $0$  case.

\q 
There are certain similarities between our approach and that of Miemietz and Oppermann. In particular we follow their method of finding modules with endomorphism algebra isomorphic  to a truncated polynomial algebra (we work with injective modules and they with projective modules).  However, in detail our techniques are very different and our  main technique is to compare the representation theory of the general linear group with that of its infinitesimal subgroups.
Moreover, we avoid certain problems with the argument of Miemietz and Oppermann (see the Remark of Section 3 for details). However, as in \cite{MO}, we are still reliant on the  work of Bergh, \cite{Bergh}, Oppermann, \cite{Oppermann} and Rouquier, \cite{Rouquier1, Rouquier2}, relating representation dimension and certain endomorphism algebras via derived categories.

\q In Section 2 we deal with the preliminary background material and notation. In Section 3 we consider certain polynomial injective modules and their endomoprhism algebras. In Section 4 we consider further the endomorphism algebras of these modules and deduce that, for fixed $n\geq 2$,  the representation dimension of $S(n,r)$ grows with $r$. For the precise result, see Corollary 4.4.

\q  In the final section we work over a quantum general linear group at a non-zero parameter $q$.  In particular we  demonstrate a certain dichotomy.  We first consider the characteristic $0$ case and show, Proposition 5.2, that for $n$ (and $q$)  fixed the representation dimension of the $q$ Schur algebra $S_q(n,r)$ (as $r$ varies) is bounded above. We then consider the case in which $k$ has positive characteristic and show,  Theorem 5.3, that if $q\neq 1$ is a root of unity then, as in the classical case, the representation dimension of $S_q(n,r)$ grows with $r$.

\section{Preliminaries}

\bigskip

\quad We start with some standard combinatorics associated with the representation theory of general linear groups. We fix a positive integer $n$. We set  $X(n)=\zed^n$.   There is a natural partial order on $X(n)$. For $\lambda=(\lambda_1,\ldots,\lambda_n), \mu=(\mu_1,\ldots,\mu_n)\in X(n)$,  we write $\lambda\leq \mu$ if $\lambda_1+\cdots+\lambda_i\leq \mu_1+\cdots+\mu_i$ for $i=1,2,\ldots,n-1$ and $\lambda_1+\cdots+\lambda_n=\mu_1+\cdots+\mu_n$. We shall use the standard $\zed$-basis   $\ep_1,\ldots,\ep_n$ of   $X(n)$,  where $\ep_i=(0,\ldots,1,\ldots,0)$ (with $1$ in the $i$th position).   We shall also need the specific elements $\omega=(1,1,\ldots,1)$ and  $\de=(n-1,\ldots,1,0)\in X(n)$.  The symmetric group $W=\Sym(n)$ acts naturally on $X(n)$.  We write $w_0$ for the longest element of  $W$,  i.e., the element such that $w_0\lambda=(\lambda_n,\ldots,\lambda_1)$, for $(\lambda_1,\ldots,\lambda_n)\in X(n)$.  

\quad We write $X^+(n)$ for the set of dominant weights, i.e., the set of $\lambda=(\lambda_1,\ldots,\lambda_n)\in X(n)$ such that $\lambda_1\geq \cdots\geq \lambda_n$.  We write $\Lambda(n)$ for the set of polynomial weights, i.e., the set of $\lambda\in X(n)$ with all $\lambda_i\geq 0$.  We set  $\Lambda^+(n)=X^+(n)\bigcap \Lambda(n)$, the set of polynomial dominant weights. For $\lambda=(\lambda_1,\ldots,\lambda_n)\in \Lambda(n)$ we define  its degree   $|\lambda|=\lambda_1+\cdots+\lambda_n$ and if $\lambda\in \Lambda^+(n)$, define its  breadth $b(\lambda)=\lambda_1$.
For a positive integer $r$ we write  $\Lambda^+(n,r)$ for the set of all $\lambda\in \Lambda^+(n)$ such that $|\lambda|=r$. An element of $\Lambda^+(n,r)$ is called a partition of $r$ into at most $n$ parts.  For an integer $l\geq 2$  we say that $\lambda=(\lambda_1,\ldots,\lambda_n)\in\Lambda^+(n)$ is {\it column $l$-regular} if $\lambda_i-\lambda_{i+1}<l$ for $1\leq i\leq n-1$ and $\lambda_n<l$.

\quad We form the integral group ring $\zed X(n)$ of $X(n)$. This has $\zed$-basis of formal exponentials $e^\lambda$, $\lambda\in X(n)$, which  multiply according to the rule $e^\lambda e^\mu=e^{\lambda+\mu}$. The action of $W$ extends to an action on $\zed X(n)$ by ring automorphisms.  For $\lambda\in X(n)$ we write $s(\lambda)$ for the orbit sum $\sum_{\mu\in W\lambda} e^\mu$.

\quad  We fix  an algebraically closed field $k$ and a positive integer $n$.    We write $G$ or $G(n)$ for the general linear group  $\GL_n(k)$.  We write $k[G]$ for the coordinate algebra of $G$.  For $1\leq i,j\leq m$ let $c_{ij}$ denote the corresponding coordinate function, i.e., the function taking $g\in G$ to its $(i,j)$ entry. We set $A(n)=k[c_{11},\ldots,c_{nn}]$.  Then $A(n)$ is a free polynomial algebra in generators $c_{11},\ldots,c_{nn}$.  Moreover  $A(n)$ has a $k$-space decomposition 
$A(n)=\oplus_{r=0}^\infty A(n,r)$, where  $A(n,r)$ is the span of the all monomials  $c_{i_1j_2}c_{i_2j_2}\ldots c_{i_rj_r}$.

\q We write  $T$ or $T(n)$ for the subgroup of $G$ consisting of all invertible diagonal matrices.   For $\lambda=(\lambda_1,\ldots,\lambda_n)\in X(n)$ we also denote by $\lambda$ the multiplicative  character of $T(n)$ defined by $\lambda(t)=t_1^{\lambda_1}\ldots t_n^{\lambda_n}$, where $t_i$ denotes the $(i,i)$-entry of the diagonal matrix $t\in T(n)$, and in this way identify $X(n)$ with  the character group of $T(n)$. 
     For $\lambda\in X(n)$ we write $k_\lambda$ for the one dimensional $T$-module on which $t\in T$ acts as multiplication by $\lambda(t)$. 
The modules  $k_\lambda$, $ \lambda\in X(n)$,  form  a complete set of pairwise non-isomorphic rational $T(n)$-modules.  We write $B$, or $B(n)$ (resp. $B^+$ or $B^+(n)$) for the subgroup of $G$ consisting of all  lower (resp. upper)  triangular invertible matrices.  For $\lambda\in X(n)$, the action of $T(n)$ extends uniquely to a module action of $B(n)$ (resp. $B^+(n)$) on $k_\lambda$ and the modules $k_\lambda$, $\lambda\in X(n)$,  form  a complete set of pairwise non-isomorphic irreducible rational $B(n)$-modules (resp. $B^+(n)$-modules). 

\q For $\lambda\in X^+(n)$ we write $\nabla(\lambda)$ for the induced module $\ind_B^Gk_\lambda$.  The dual of a finite dimensional rational $G$-module $V$ will be denoted $V^*$. We also have the Weyl module $\Delta(\lambda)=\nabla(-w_0\lambda)^*$, for $\lambda\in X^+(n)$.   A good filtration (resp. Weyl filtration) of a rational $G$-module $V$ is a filtration $0=V_0\leq V_1\leq \cdots\leq V_r=V$ such that for each $0<i\leq r$  the section $V_i/V_{i-1}$ is either $0$ or isomorphic to $\nabla(\lambda^i)$ (resp. $\Delta(\lambda^i)$) for some $\lambda^i\in X^+(n)$. If $V$ has a good filtration (resp. Weyl filtration) then, for $\lambda\in X^+(n)$, the number of sections in the filtration isomorphic to $\nabla(\lambda)$ (resp. $\Delta(\lambda)$) is independent of the choice of good filtration (resp. Weyl filtration) and will be denoted $(V:\nabla(\lambda))$ (resp. $(V:\Delta(\lambda))$).

\quad For a rational $T$-module $V$ and $\lambda\in X(n)$ we have the $\lambda$ weight space $V^\lambda=\{v\in V\vert tv=\lambda(t)v \hbox{ for all } t\in T\}$.  A rational $T$-module $V$ decomposes as a direct sum of its weight spaces: $V=\bigoplus_{\lambda\in X(n)} V^\lambda$.  To a finite dimensional rational $T$-module $V$ we attach its character 
$$\ch V=\sum_{\mu \in X(n)} \dim V^\mu e^\mu\in\zed X(n)$$
For $\lambda\in X^+(n)$ the character $\chi(\lambda)=\ch \nabla(\lambda)=\ch \Delta(\lambda)$ is given by Weyl's character formula, [\cite{Jan}, II, 5.10 Proposition]. 
For each $\lambda\in X^+(n)$ there is an irreducible rational $G$-module $L(\lambda)$, with unique highest weight $\lambda$ occurring with multiplicity one,  and
the modules  $L(\lambda)$, $\lambda\in X^+(n)$,  form  a complete set of pairwise non-isomorphic irreducible rational $G$-modules.   For $r\geq 1$ we write $\St_r$ for the $r$th Steinberg module $L((p^r-1)\delta)$. We also write simply $\St$ for $\St_1$. We shall write $D$ for the determinant module, i.e, the one dimensional $G$-module on which $g\in G$ acts as multiplication by $\det(g)$.  For a finite dimensional $G$-module and $r\geq 0$ we write $V^{\otimes r}$ for the $r$-fold tensor product $V\otimes\cdots\otimes V$. We write $V^{\otimes -r}$ for the dual module $V^*\otimes \cdots\otimes V^*$. In particular  we have $D^{\otimes r}=L(r,r,\ldots,r)$, for any integer $r$.  We write $E$ for the space of column vectors of length $n$. Then $E$ is a $G$-module (the natural module) with action by matrix multiplication. For $1\leq r\leq n$ the exterior power $\tbw^r E$ is irreducible of highest weight $(1,1,\dots,1,0,\ldots,0)$ (where $1$ occurs $r$ times). So we have $\tbw^rE=L(1,1,\ldots,1,0,\ldots,0)$ and in particular $D=\tbw^n E$.
 For a finite dimensional rational $G$-module and $\lambda\in X^+(n)$ we write $[V:L(\lambda)]$ for the multiplicity of $L(\lambda)$ as a composition factor of $V$. 
 
 \q A finite dimensional rational module $V$ which has a filtration by induced modules and a filtration by Weyl modules is called a tilting module. For each $\lambda\in X^+(n)$ there exists an indecomposable  tilting module $M(\lambda)$ which has a weight $\lambda$ occurring with multiplicity one and all other weights smaller than $\lambda$. Moreover, the modules $M(\lambda)$, 
$\lambda\in X^+(n)$,  form a  complete set of pairwise non-isomorphic  indecomposable tilting modules.
 
 \q We write $\Map(G,k)$ for the space of all $k$-valued functions on $G$.  Let $V$ be a finite dimensional rational module with basis $v_1,\ldots,v_m$. The coefficient functions $f_{ij}$, $1\leq i,j\leq m$, are defined by the equations
 $$gv_i=\sum_{j=1}^m f_{ji}(g)v_j$$
 for $g\in G$, $1\leq i\leq m$.  The coefficient space $\cf(V)$ is the $k$-span of the coefficient functions $f_{ij}$, $1\leq i,j\leq m$.

\q A $G$-module $V$ is called polynomial if $\cf(V)\leq A(n)$ and polynomial of degree $r$ if $\cf(V)\leq A(n,r)$.  A polynomial $G$-module $V$ has a unique module decomposition 
$$V=\bigoplus_{r=0}^\infty V(r)$$
where $V(r)$ is polynomial of degree $r$. The coordinate algebra $k[G]$ has a natural Hopf algebra structure and each space $A(n,r)$ is a subcoalgebra. The dual space $S(n,r)$ has a natural algebra structure. The algebras $S(n,r)$ are called Schur algebras. The category of polynomial modules of degree $r$ is naturally equivalent to the category of $S(n,r)$-modules.

\q The modules $L(\lambda)$, $ \lambda\in \Lambda^+(n)$, form a complete set of pairwise non-isomorphic irreducible polynomial modules.  
For $\lambda\in \Lambda^+(n)$ we write $I(\lambda)$ for the injective hull of $L(\lambda)$ in the category of polynomial modules. Then $I(\lambda)$ is finite dimensional, indeed it is may be identified with the injective hull of $L(\lambda)$ as a module for the Schur algebra $S(n,r)$, where $r=|\lambda|$. For further details of the category of polynomial modules in the spirit of this paper see for example, \cite{Green}, \cite{Do2},\cite{DeVisscherDonkin}.  We shall use, without further reference the fact that a rational $G$-module is polynomial if and only if all of its composition factors are of the form $L(\lambda)$, $\lambda\in \Lambda^+(n)$ - this follows from  \cite{Do1},  Section 1, (1) for example. We shall call a $G$-module polynomially injective if it is polynomial and injective in the category of polynomial modules.

\quad Now suppose that $k$ has prime characteristic $p$. Let $r\geq 1$. We write $X_m(n)$ for the set of all column $p^m$-regular dominant weights. We have the Frobenius morphism $F:G\to G$, taking an invertible matrix $(a_{ij})$ to $(a_{ij}^p)$.  If $V$ is a rational $G$-module  affording  representation $\pi:G\to \GL(V)$ then write $V^F$ for the  vector space $V$ viewed as a $G$-module via the representation $\pi\circ F$.  We write $G_m$ (resp. $B_m$, resp. $B^+_m$) for the $m$th infinitesimal subgroup of $G$ (resp. $B$, resp. $B^+$).  The modules $L(\lambda)$, $\lambda\in X_m(n)$, form a complete set of pairwise non-isomorphic irreducible $G_m$-modules. Given a rational $G$-module $V$ the fixed subspace $V^{G_m}$ is a $G$-submodule. A rational $G$-module $V$ which is trivial as a $G_m$ is isomorphic to $Z^{F^m}$ for a uniquely determined (up to isomorphism) rational $G$-module $Z$.   For a rational $G$-module $V$ we have the exact sequence,  [\cite{Jan}, II 9.23],

\begin{align*}
0&\to H^1(G/G_m,V^{G_m})\to H^1(G,V)\to H^1(G_m,V)^{G/G_m}\cr
&\to H^2(G/G_m,V^{G_m})\to H^2(G,V).
\end{align*}
This will be particularly useful for us in conjunction with the isomorphism 
$$H^i(G/G_m,Z^{F^m})\to H^i(G,Z)$$
for $Z\in \mod(G)$ and $i\geq 0$ (which follows from the fact that $F^m:G\to G$ induces an isomorphism $G/G_m\to G$). We shall refer to the first property  as the $5$-term exact sequence and use the second property without further reference. 

\q Finally, for $\lambda\in X_m(n)$, $\mu\in X^+(n)$, we have $L(\lambda+p^m\mu)\cong L(\lambda)\otimes L(\mu)^{F^m}$ (see  [\cite{Jan}, II Proposition 3.16]). We shall also use this without further reference.

\section{Some injective indecomposable modules}

\bigskip

\bf Remark\q\rm We place the emphasis throughout on injective modules rather than the projective modules as used in \cite{MO}. We note that there are certain problems with the argument of Miemietz and Oppermann, \cite{MO}.  In particular  \cite{MO}, Lemma 4.5 is not correct.  (For a counterexample take $n=2$, $m=1$, $\lambda^0=0$  and $\lambda^1=(1,0)$.   The  assertion is then that $E^F$ is a certain tilting module but it has highest weight $(p,0)$ and the tilting module of this highest weight must have the corresponding induced module, i.e the symmetric power $S^pE$,  as a section. This is impossible since $E^F$ has dimension $2$ and $S^pE$ has dimension $p+1$.) Furthermore, \cite{MO}, Lemma 4.5 is used in the arguments to justify \cite{MO}, Lemmas 4.6 and 4.8. (However, we know that the statements are true because of Lemma 3.4 below which we prove using our infinitesimal methods.) Lemmas 4.6 and 4.8 of \cite{MO} are used to reach the main goal of the paper \cite{MO}, namely Theorem 4.13.

\q  For the most part our arguments proceed via the relationship between the representation theory of $G$ and $G_m$ and are very different from those in \cite{MO}. In  order to give   a clear line of development we  give complete proofs but note that the proof of  Lemma 3.4 below  has features in common with the proof of \cite{MO}, Lemma 4.6.

\bigskip

\q For a $G$-module, resp. $G_m$-module,  $M$ we write $\soc_G(M)$ (resp. $\soc_{G_m}(M)$) for the socle of $M$  as a $G$-module  (resp. $G_m$-module). 

\bigskip

\begin{lemma}

Let $M,N$ be rational $G$-modules. If $M$ has $G_m$-socle $L(\lambda)$, with $\lambda\in X_m(n)$, then $M\otimes N^{F^m}$ has $G$-socle $L(\lambda)\otimes \soc_G(N)^{F^m}$.

\end{lemma}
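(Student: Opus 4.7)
The plan is to reduce the problem to the restricted case by comparing $G$-socles with $G_m$-socles. By Steinberg's tensor product theorem every simple rational $G$-module restricts to a semisimple $G_m$-module, so for any $V\in \mod(G)$ one has $\soc_G(V)\leq \soc_{G_m}(V)$. Applying this to $V=M\otimes N^{F^m}$ and using that $N^{F^m}$ is $G_m$-trivial, I compute
$$\soc_{G_m}(M\otimes N^{F^m})=\soc_{G_m}(M)\otimes N^{F^m}=L(\lambda)\otimes N^{F^m}$$
as a $G$-submodule of $M\otimes N^{F^m}$. Hence $\soc_G(M\otimes N^{F^m})$ lies inside $L(\lambda)\otimes N^{F^m}$, and by the standard observation that a semisimple submodule of a submodule is a semisimple submodule of the ambient module, this forces $\soc_G(M\otimes N^{F^m})=\soc_G(L(\lambda)\otimes N^{F^m})$. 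The task reduces to proving
$$\soc_G(L(\lambda)\otimes N^{F^m})=L(\lambda)\otimes \soc_G(N)^{F^m}.$$

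For this equality I argue the two inclusions. The inclusion $\supseteq$ is clear: for each $\beta\in X^+(n)$ the module $L(\lambda)\otimes L(\beta)^{F^m}\cong L(\lambda+p^m\beta)$ is simple, so the right-hand side is a semisimple $G$-submodule. For $\subseteq$ I would compare multiplicities via the adjunction
$$\Hom_G(L(\lambda)\otimes L(\beta)^{F^m},L(\lambda)\otimes N^{F^m})\cong \Hom_G(L(\beta)^{F^m},L(\lambda)^*\otimes L(\lambda)\otimes N^{F^m}).$$
A $G$-map out of the $G_m$-trivial module $L(\beta)^{F^m}$ lands in the $G_m$-fixed points, which equal $(L(\lambda)^*\otimes L(\lambda))^{G_m}\otimes N^{F^m}$ since $N^{F^m}$ is itself $G_m$-trivial. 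Schur's lemma applied to $L(\lambda)$ as a simple $G_m$-module (where the hypothesis $\lambda\in X_m(n)$ is crucial) gives $(L(\lambda)^*\otimes L(\lambda))^{G_m}=\End_{G_m}(L(\lambda))=k$ as a trivial $G$-module. The Hom space above therefore collapses to $\Hom_G(L(\beta)^{F^m},N^{F^m})=\Hom_G(L(\beta),N)$. Summing over $\beta\in X^+(n)$, the multiplicity of $L(\lambda+p^m\beta)$ in $\soc_G(L(\lambda)\otimes N^{F^m})$ equals that of $L(\beta)$ in $\soc_G(N)$, which matches the multiplicity in $L(\lambda)\otimes \soc_G(N)^{F^m}$.

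The only non-formal point in the argument is the identification $(L(\lambda)^*\otimes L(\lambda))^{G_m}=k$ as a trivial $G$-module, which is precisely where the restricted hypothesis $\lambda\in X_m(n)$ enters, via Schur's lemma over $G_m$. Everything else is routine manipulation of adjunctions together with Steinberg's tensor product theorem and the standard identification $\Hom_G(U^{F^m},V^{F^m})=\Hom_G(U,V)$ coming from the isomorphism $G/G_m\cong G$ induced by Frobenius.
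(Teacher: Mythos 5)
Your argument is correct, but it is organised differently from the paper's. The paper's proof is essentially a two-line citation: it invokes the decomposition of [Jantzen, II 3.16], $\soc_G Z\cong\bigoplus_{\xi\in X_m(n)}L(\xi)\otimes\soc_G(\Hom_{G_m}(L(\xi),Z))$, observes that only $\xi=\lambda$ can contribute because $Z|_{G_m}$ is a sum of copies of $M$, and computes $\Hom_{G_m}(L(\lambda),Z)\cong N^{F^m}$. You instead reprove the relevant instance of that structural result from scratch: you first trap $\soc_G(Z)$ inside $\soc_{G_m}(Z)=L(\lambda)\otimes N^{F^m}$ using Steinberg's theorem, then pin down $\soc_G(L(\lambda)\otimes N^{F^m})$ by an adjunction and the Schur's-lemma computation $(L(\lambda)^*\otimes L(\lambda))^{G_m}=k$. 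The key input is the same in both proofs (namely, that $\lambda\in X_m(n)$ makes $L(\lambda)$ simple and Schurian over $G_m$, so the $N^{F^m}$ factor splits off cleanly), so the mathematics is the same; what your version buys is self-containedness at the cost of length. One small point you should make explicit in the multiplicity step: to conclude equality from the containment $L(\lambda)\otimes\soc_G(N)^{F^m}\subseteq\soc_G(L(\lambda)\otimes N^{F^m})$ together with matching multiplicities of the $L(\lambda+p^m\beta)$, you also need that no simple module of any other highest weight occurs in the left-hand socle; this follows at once from your own step 2, since any simple $G$-submodule of $L(\lambda)\otimes N^{F^m}$ restricts to $G_m$ as a sum of copies of $L(\lambda)$ and hence, by Steinberg, has highest weight congruent to $\lambda$ modulo $p^mX(n)$.
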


\begin{proof}

Let $Z=M\otimes N^{F^m}$.  Using   [\cite{Jan}, II 3.16]  we have 

$$\soc_GZ \cong \bigoplus_{\xi \in X_m(n)}L(\xi)\otimes \soc_G(\Hom_{G_m}(L(\xi),Z)).$$

However, as a $G_m$-module $Z$ is a direct sum of copies of $M$ and so its  $G_m$-socle is a direct sum of copies of $L(\lambda)$.  Hence a non-zero term in the above can only occur in case $\xi=\lambda$. Moreover, we have 
$$\Hom_{G_m}(L(\lambda),Z)=\Hom_{G_m}(L(\lambda),M)\otimes N^{F^m}\cong N^{F^m}.$$
The result follows.

\end{proof}

\begin{lemma}  Let $\lambda\in X_m(n)$ and $\mu\in \Lambda^+(n)$.

(i) The $G$-module  $I(\lambda+p^m\mu)$ has  $G_m$-socle $L(\lambda)\otimes I(\mu)^{F^m}$.

(ii) If $I$ is a finite dimensional polynomial $G$-module such that $\soc_{G_m}(I)=L(\lambda)$ (as $G$-modules) and $I|_{G_m}$ is injective. Then $I=I(\lambda)$. 

(iii) For $\mu\in \Lambda^+(n)$ the $G$-module $I(\lambda)\otimes I(\mu)^{F^m}$ has simple socle $L(\lambda+p^m\mu)$ and if  $I(\lambda)|_{G_m}$ is injective then 
$$I(\lambda+p^m\mu)=I(\lambda)\otimes I(\mu)^{F^m}.$$

\end{lemma}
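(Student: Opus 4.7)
The plan is to prove (i) directly and then derive (ii) and (iii) from it by exploiting the $G_m$-injectivity hypotheses to split carefully chosen short exact sequences over $G_m$.

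For (i), set $\tilde I = I(\lambda+p^m\mu)$ and $N = \soc_{G_m}(\tilde I)$. As $G_m$ is normal in $G$, $N$ is a $G_m$-semisimple $G$-submodule, and the structure theorem for such modules gives $N = \bigoplus_{\xi\in X_m(n)} L(\xi)\otimes V_\xi^{F^m}$ for unique rational $G$-modules $V_\xi$. Taking $G$-socles and matching with $\soc_G(\tilde I) = L(\lambda)\otimes L(\mu)^{F^m}$ forces $V_\xi=0$ for $\xi\neq\lambda$ and $\soc_G(V_\lambda) = L(\mu)$, so $N = L(\lambda)\otimes V^{F^m}$ with $V := V_\lambda$. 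Applying Lemma 3.1 with $M = L(\lambda)$ (whose $G_m$-socle is $L(\lambda)$ by simplicity) and $I(\mu)$ in place of $N$ shows that $L(\lambda)\otimes I(\mu)^{F^m}$ is a polynomial $G$-module with simple $G$-socle $L(\lambda+p^m\mu)$; polynomial injectivity of $\tilde I$ then gives an embedding $L(\lambda)\otimes I(\mu)^{F^m}\hookrightarrow \tilde I$. Its image is $G_m$-semisimple, so lies in $N$, and via the equivalence induced by Frobenius twist this yields an embedding $I(\mu)\hookrightarrow V$. Polynomial injectivity of $I(\mu)$ splits this as $V = I(\mu)\oplus W$, and the constraint $\soc_G V = L(\mu)$ forces $W = 0$, so $V = I(\mu)$.

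For (ii), specialising (i) to $\mu = 0$ (noting $I(0) = k$ since degree-zero polynomial modules are semisimple) gives $\soc_{G_m}(I(\lambda)) = L(\lambda)$. The hypothesis $\soc_{G_m}(I) = L(\lambda)$ then forces $\soc_G(I) = L(\lambda)$, so polynomial injectivity of $I(\lambda)$ gives $I \hookrightarrow I(\lambda)$. The short exact sequence $0\to I \to I(\lambda)\to C\to 0$ splits over $G_m$ because $I|_{G_m}$ is $G_m$-injective, and taking $G_m$-socles yields $L(\lambda) = L(\lambda)\oplus\soc_{G_m}(C)$, forcing $\soc_{G_m}(C)=0$ and hence $C=0$.

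For (iii), the first assertion follows from Lemma 3.1 applied to $I(\lambda)$ and $I(\mu)$, using $\soc_{G_m}(I(\lambda)) = L(\lambda)$ from (i). For the second, assume $I(\lambda)|_{G_m}$ is injective; since $I(\mu)^{F^m}$ is $G_m$-trivial, $(I(\lambda)\otimes I(\mu)^{F^m})|_{G_m}$ is a direct sum of copies of $I(\lambda)|_{G_m}$ and hence $G_m$-injective. The first part gives an embedding $I(\lambda)\otimes I(\mu)^{F^m}\hookrightarrow I(\lambda+p^m\mu)$; the corresponding short exact sequence splits over $G_m$. Both $\soc_{G_m}(I(\lambda+p^m\mu))$ (by (i)) and $\soc_{G_m}(I(\lambda)\otimes I(\mu)^{F^m}) = \soc_{G_m}(I(\lambda))\otimes I(\mu)^{F^m}$ equal $L(\lambda)\otimes I(\mu)^{F^m}$, so the cokernel has zero $G_m$-socle and must vanish.

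The main obstacle is part (i): the crux is constructing $L(\lambda)\otimes I(\mu)^{F^m}$ via Lemma 3.1, embedding it into $\tilde I$, and using polynomial injectivity of $I(\mu)$ together with the socle constraint to pin down $V = I(\mu)$. Once (i) is in hand, (ii) and (iii) reduce to the standard technique of splitting a short exact sequence over $G_m$ and comparing $G_m$-socles.
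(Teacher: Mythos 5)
Your proof is correct. Part (i) follows essentially the paper's route: decompose the $G_m$-socle as $\bigoplus_{\xi} L(\xi)\otimes V_\xi^{F^m}$, use simplicity of the $G$-socle to isolate $\xi=\lambda$, and identify $V_\lambda$ with $I(\mu)$ (the paper does this by producing embeddings in both directions and the two-sided containment of socles; you do it by splitting off $I(\mu)$ inside $V_\lambda$ --- a cosmetic difference). Parts (ii) and (iii), however, are proved by a genuinely different and more elementary mechanism. The paper establishes polynomial injectivity of $I$ (resp.\ of $I(\lambda)\otimes I(\mu)^{F^m}$) intrinsically, by showing $\Ext^1_G(L(\tau),-)=0$ for all $\tau$ of the relevant degree via the $5$-term exact sequence, reducing to $H^1(G,k)=0$ (resp.\ to $\Ext^1_G(L(\nu),I(\mu))=0$). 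You instead embed the module in question into the known injective hull and kill the cokernel: the inclusion splits over $G_m$ by the $G_m$-injectivity hypothesis, and comparing $G_m$-socles (using part (i), in particular $\soc_{G_m}I(\lambda)=L(\lambda)$ from the case $\mu=0$, where $I(0)=k$) forces the cokernel to have zero $G_m$-socle and hence to vanish. This avoids the cohomological input entirely, at the cost of leaning harder on part (i); the paper's Ext-vanishing argument has the advantage of proving injectivity directly, which is the template reused elsewhere (e.g.\ in Lemma 3.2(iii) as written and in Section 4). One point you share with the paper in glossing over: in part (i) the module $V=V_\lambda$ must be known to be polynomial of degree $|\mu|$ before injectivity of $I(\mu)$ in the polynomial category can be invoked; this follows from the degree and dominance constraints on the composition factors $L(\lambda+p^m\nu)$ of $L(\lambda)\otimes V^{F^m}\leq I(\lambda+p^m\mu)$, using $0\leq\lambda_n<p^m$.
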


\begin{proof} (i)   Using   [{\cite{Jan}, II 3.16, (1)}], we have an isomorphism of $G$-modules,

\[\soc_{G_m}I(\lambda+p^m\mu)\cong \bigoplus_{\xi \in X_m(n)}L(\xi)\otimes \Hom_{G_m}(L(\xi),I(\lambda+p^m\mu)).\]

Since $I(\lambda+p^m\mu)$ has simple $G$-socle $L(\lambda)\otimes L(\mu)^{F^m}$ we must have 

$$\soc_{G_m}I(\lambda+p^m\mu)=L(\lambda)\otimes V^{F^m}$$
for some $G$-module $V$ with simple socle $L(\mu)$. Thus $V$ embeds in $I(\mu)$ and so $\soc_{G_m}(I(\lambda+p^m\mu))$ embeds in $L(\lambda)\otimes I(\mu)^{F^m}$. 
 On the other hand $L(\lambda)\otimes I(\mu)^{F^m}$ has simple socle $L(\lambda)\otimes L(\mu)^{F^m}$ by the above Lemma. So $L(\lambda)\otimes I(\mu)^{F^m}$ embeds in $I(\lambda+p^m\mu)$ and hence in $\soc_{G_m}(I(\lambda+p^m\mu))$. 
  So we must have $V=I(\mu)$ and we are done.

(ii) Note that $I$ is indecomposable so is polynomial of some degree $r$, say.    To prove that $I$ is polynomially injective we show  that $\Ext^1_G(L(\tau),I)=0$ for all $\tau\in \Lambda^+(n,r)$. We write $\tau=\xi+p^m\nu$ with $\xi\in X_m(n)$, $\nu\in \Lambda^+(n)$. Then $L(\tau)=L(\xi)\otimes L(\nu)^{F^m}$.  Since $I|_{G_m}$ is injective the $5$-term exact sequence gives 
$$\Ext^1_G(L(\xi)\otimes L(\nu)^{F^m},I)=H^1(G/G_m,\Hom_{G_m}(L(\xi)\otimes L(\nu)^{F^m},I)).$$
The $G_m$-socle of $I$ is $L(\lambda)$ so if $\Hom_{G_m}(L(\xi)\otimes L(\nu)^{F^m},I)\neq 0$ then  $\xi=\lambda$ and then $|\xi+p^m\nu|=r=|\lambda|$ implies that $|p^m\nu|=0$ so that $\nu=0$ and the above becomes $H^1(G/G_m,\Hom_{G_m}(L(\lambda),I))=H^1(G,k)=0$. 

(iii) The first assertion follows immediately from  Lemma 3.1

\q We now assume $I(\lambda)|_{G_m}$ is injective. To conclude we need to show that $I(\lambda)\otimes I(\mu)^{F^m}$ is polynomially injective and we do this by proving that 

$\Ext^1_G(L(\tau),I)=0$ for all $\tau\in \Lambda^+(n,r)$.  As usual we write $\tau=\xi+p^m\nu$, with $\xi\in X_m(n)$, $\nu\in \Lambda^+(n)$.  We use the $5$ term exact sequence, as in the proof of (ii), and we deduce that 
\begin{align*}
\Ext^1_G&(L(\tau),I(\lambda)\otimes I(\mu)^{F^m})\cr
=H^1&(G/G_m,(\Hom_{G_m}(L(\xi)\otimes L(\nu)^{F^m},I(\lambda)\otimes I(\mu)^{F^m})).
\end{align*}
But if $\Hom_{G_m}(L(\xi)\otimes L(\nu)^{F^m},I(\lambda)\otimes I(\mu)^{F^m})\neq 0$ then 
 we have $\xi=\lambda$ and 
\begin{align*}
H^1(&G/G_m,(\Hom_{G_m}(L(\xi)\otimes L(\nu)^{F^m},I(\lambda)\otimes I(\mu)^{F^m}))\cr
&=H^1(G/G_m,(L(\nu)^{F^m})^*\otimes I(\mu)^{F^m})=H^1(G,L(\nu)^*\otimes I(\mu))\cr
&=\Ext^1_G(L(\nu),I(\mu))=0
\end{align*}
since $|\nu|=|\mu|$ and $I(\mu)$ is polynomially injective.

\end{proof}

\begin{remark} It would be interesting to know precisely for which $\lambda\in \Lambda^+(n)$ the module $I(\lambda)$ is injective as a $G_m$-module. We will encounter this property in various special cases  in what follows.

\end{remark}

\bigskip

\begin{lemma}
Let $h\geq 0$. Let $\lambda^i  \in X_m(n)$,  for $0\leq i<h$,  and $\gamma \in \Lambda^+(n)$.  We put $\lambda=\sum_{0\leq i<h} p^{mi}\lambda^i +p^{mh}\gamma$ and $I_h(\lambda)=\bigotimes_{0\leq i<h} I(\lambda^i)^{F^{mi}}\bigotimes I(\gamma)^{F^{mh}}$.

(i) The  module $I_h(\lambda)$ has socle $L(\lambda)$.

(ii) We have 
$$[I_h(\lambda):L(\lambda)]=[I(\lambda^0):L(\lambda^0)]\ldots [I(\lambda^{h-1}):L(\lambda^{h-1})][I(\gamma):L(\gamma)].$$

(iii) We have 

 \[\End_G(I_h(\lambda))=\bigotimes_{0\leq i<h} \End_G(I(\lambda^i)^{F^{mi}})\bigotimes\End_G( I(\gamma)^{F^{mh}}). \]
\end{lemma}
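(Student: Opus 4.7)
The plan is to prove (i), (ii), (iii) by induction on $h$, using the splitting
\[
I_h(\lambda) = I(\lambda^0) \otimes I_{h-1}(\lambda'')^{F^m}, \qquad \lambda'' = \sum_{0 \leq j < h-1} p^{mj} \lambda^{j+1} + p^{m(h-1)} \gamma,
\]
so that $\lambda = \lambda^0 + p^m \lambda''$ with $\lambda^0 \in X_m(n)$. For (i), the base case $h = 0$ just says $\soc_G I(\gamma) = L(\gamma)$. For the inductive step, Lemma 3.2(i) applied with $\mu = 0$ yields $\soc_{G_m} I(\lambda^0) = L(\lambda^0)$, while the inductive hypothesis gives $\soc_G I_{h-1}(\lambda'') = L(\lambda'')$. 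Lemma 3.1 then combines these to give $\soc_G I_h(\lambda) = L(\lambda^0) \otimes L(\lambda'')^{F^m} = L(\lambda)$.

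For (ii), I would first establish the general formula
\[
[M \otimes N^{F^m}: L(\tau_0 + p^m \tau_1)] = \sum_{\mu_1 \in \Lambda^+(n)} [M: L(\tau_0 + p^m \mu_1)] \cdot [L(\mu_1) \otimes N: L(\tau_1)]
\]
for polynomial modules $M, N$ and $\tau_0 \in X_m(n)$, $\tau_1 \in \Lambda^+(n)$. This follows from Steinberg's tensor product theorem: each composition factor $L(\mu_0 + p^m \mu_1) = L(\mu_0) \otimes L(\mu_1)^{F^m}$ of $M$ contributes composition factors $L(\mu_0 + p^m \nu)$ to $M \otimes N^{F^m}$ with multiplicities $[L(\mu_1) \otimes N: L(\nu)]$. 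Specialising to $M = I(\lambda^0)$ and $\tau_0 = \lambda^0$, the polynomial-degree constraint $|\lambda^0| = |\lambda^0| + p^m |\mu_1|$ forces $\mu_1 = 0$ in the sum, giving
\[
[I_h(\lambda): L(\lambda)] = [I(\lambda^0): L(\lambda^0)] \cdot [I_{h-1}(\lambda''): L(\lambda'')].
\]
Iterating yields (ii).

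For (iii), I would use a dimension count. The standard fact that $\dim \Hom_G(X, I(\mu)) = [X: L(\mu)]$ for polynomial $X$ of degree $|\mu|$ (valid since $\Hom_G(-, I(\mu))$ is exact and $\Hom_G(L(\nu), I(\mu)) = \delta_{\nu \mu} k$) gives $\dim \End_G(I(\lambda^i)) = [I(\lambda^i): L(\lambda^i)]$ and similarly for $I(\gamma)$. Frobenius twists preserve endomorphism rings (as $F(G)$ is Zariski dense in $G$), so the right-hand side of (iii) has dimension $\prod_i [I(\lambda^i): L(\lambda^i)] \cdot [I(\gamma): L(\gamma)]$. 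By (i), $I_h(\lambda) \hookrightarrow I(\lambda)$, so $\End_G(I_h(\lambda)) \hookrightarrow \Hom_G(I_h(\lambda), I(\lambda))$ has dimension at most $[I_h(\lambda): L(\lambda)]$, which by (ii) equals the same product. The natural tensor-of-endomorphisms map from the right-hand side to $\End_G(I_h(\lambda))$ is always injective (it factors through $\bigotimes \End_k(-) \hookrightarrow \End_k(\bigotimes -)$), and the dimension match therefore promotes it to an isomorphism.

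The main obstacle is part (ii): one must carefully justify the composition multiplicity formula for $M \otimes N^{F^m}$ using Steinberg's tensor product theorem, and observe that the polynomial-degree constraint on $I(\lambda^0)$ kills all but the $\mu_1 = 0$ term of the sum. Parts (i) and (iii) then follow cleanly from Lemmas 3.1 and 3.2(i), combined with the basic homological computation $\dim \Hom_G(X, I(\mu)) = [X: L(\mu)]$ and the injectivity of the canonical tensor map.
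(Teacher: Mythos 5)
Your proposal is correct and follows essentially the same route as the paper: part (i) via Lemma 3.1 together with $\soc_{G_m}I(\lambda^0)=L(\lambda^0)$ and induction, part (ii) by passing a composition series of $I(\lambda^0)$ through Steinberg's tensor product theorem and using the degree constraint to kill all terms with $\mu_1\neq 0$, and part (iii) by realising the tensor product of endomorphism algebras as a subalgebra and matching dimensions against $[I_h(\lambda):L(\lambda)]$. The only cosmetic difference is that you bound $\dim\End_G(I_h(\lambda))$ via the embedding $I_h(\lambda)\hookrightarrow I(\lambda)$ and injectivity of $I(\lambda)$, whereas the paper uses left exactness of $\Hom_G(-,I_h(\lambda))$ and the simple socle from (i); both yield the same bound.
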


\begin{proof}

(i) This  follows   from  Lemma 3.1 with $M=I(\lambda^0)$, $N=I(\lambda^1)\otimes\cdots\otimes I(\gamma)^{F^{m(h-1)}}$ and induction.

(ii)  The result is trivial for $h=0$. Now assume $h\geq 1$. Let $J=I(\lambda^1)\otimes\cdots\otimes I(\gamma)^{F^{m(h-1)}}$. Let $0=Z_0<Z_1<\cdots<Z_t$ be a composition series of $I(\lambda^0)$, with $Z_i/Z_{i-1}\cong L(\xi^i)\otimes L(\nu^i)^{F^m}$, with $\xi^i\in X_m(n)$ and $\nu^i\in \Lambda^+(n)$, for $1\leq i\leq t$.  Then we have a filtration $Z_0\otimes J^{F^m}< Z_1\otimes J^{F^m} <\cdots < Z_t\otimes J^{F^m}$ of $I_h(\lambda)$.  Hence we have
$$[I_h(\lambda):L(\lambda)]=\sum_{i=1}^t  [(Z_i/Z_{i-1})\otimes J^{F^m}:L(\lambda)].$$

\q But now $(Z_i/Z_{i-1})\otimes J^{F^m}$, as a $G_m$-module, is a direct sum of copies of $L(\xi^i)$ and $L(\lambda)$, as a $G_m$-module, is a direct sum of copies of $L(\lambda^0)$.  Hence $[(Z_i/Z_{i-1})\otimes J^{F^m}:L(\lambda)]\neq 0$ implies that $\xi^i=\lambda^0$, and since $|\xi^i+p^m\nu^i|=|\lambda^0|$ in this case we must have $\nu^i=0$ and 
$$[(Z_i/Z_{i-1})\otimes J^{F^m}:L(\lambda)]=[L(\lambda^0)\otimes J^{F^m}:L(\lambda^0)\otimes L(\mu)^{F^m}]=[J:L(\mu)]$$
where $\mu=\lambda^1+\cdots+p^{m(h-2)}\lambda^{h-1}+p^{m(h-1)}\gamma$.   
Hence we have
$$[I_h(\lambda):L(\lambda)]=[I(\lambda^0):L(\lambda^0)][J:L(\mu)]$$
i.e., 
$$[I_h(\lambda):L(\lambda)]=[I(\lambda^0):L(\lambda^0)][I_{h-1}(\mu):L(\mu)]$$
and the result follows by induction.

(iii)   We identify

$$\bigotimes_{0\leq i<h} \End_G(I(\lambda^i)^{F^{mi}})\bigotimes\End_G( I(\gamma)^{F^{mh}})$$

with a subalgebra of  $\End_G(I_h(\lambda))$ via the natural embedding.
 It suffices to show that  the dimension of the second algebra is bounded above by the dimension of the first.
By (i), $I_h(\lambda)$ has simple $G$-socle $L(\lambda)$ and so, by left exactness of $\Hom_G(-,I_h(\lambda))$ we have that $\dim \Hom_G(V,I_h(\lambda))\leq [V:L(\lambda)]$, for $V\in \mod(G)$. Hence the dimension $\End_G(I_h(\lambda))$ is bounded above by the composition multiplicity $[I_h(\lambda):L(\lambda)]$.  The result now follows by part (ii).

\end{proof}

\bigskip

\q We introduce some additional notation for use in the proof of the next result. For $\lambda\in X(n)$ there exists a unique (up to isomorphism) irreducible $G_1T$-module $\hatL_1(\lambda)$ with highest  weight $\lambda$. We write $\hatQ_1(\lambda)$ for the injective hull of $\hatL_1(\lambda)$ as a $G_1T$-module.  
For $\lambda\in X(n)$ we have the induced modules $\hatZ_1'(\lambda)=\ind_{B_1T}^{G_1T}k_\lambda$ and $\hatZ_1(\lambda)=\ind_{B_1^+T}^{G_1T}k_\lambda$.  
The character of these modules is given by 
$$\ch \hatZ_1'(\lambda)=\ch \hatZ_1(\lambda)=e^{\lambda-(p-1)\delta}\chi((p-1)\delta).$$

The module $\hatQ_1(\lambda)$ admits a filtration with sections $\hatZ_1'(\mu)$, $\mu\in X(n)$,  and also a filtration with sections $\hatZ_1(\mu)$, $\mu\in X(n)$. 

\q For $\lambda\in X_1(n)$ we have $L(\lambda)|_{G_1T}=\hatL_1(\lambda)$.  We write $Q_1(\lambda)$ for the restriction of $\hatQ_1(\lambda)$ to $G_1$. Then $Q_1(\lambda)$ is the injective hull of $L(\lambda)$, as a $G_1$-module, for $\lambda\in X_1(n)$. For $\mu\in X(n)$ we write   $Z_1'(\mu)$ (resp. $Z_1(\mu)$) for the restriction of $\hatZ_1'(\mu)$ (resp. $\hatZ_1(\mu)$) to $G_1$. The module $Q_1(\lambda)$ admits a filtration with sections $Z_1'(\mu)$, $\mu\in X(n)$,  and also a filtration with sections $Z_1(\mu)$, $\mu\in X(n)$. 

\q For a full discussion of these properties see,  [\cite{Jan}, II Chapter 9].

\begin{remark}
Let  $m\geq 1$ and  $\lambda\in X_m(n)$ with  $b(\lambda)<p^m$. It is easy to check that   $(p^m-1)\delta+w_0\lambda \in X_m(n)$. 
\end{remark}

\bigskip

  Lemma 3.6 (iv)  and Lemma 4.1(ii) follow from (a very special case of)  a result of Andersen, Jantzen and Soergel, [\cite{AJS}, 19.4 Theorem a)]. We provide an independent proof in our  case using methods in keeping with the spirit of this paper.

\bigskip

\begin{lemma}
Suppose $\lambda\in X_1(n)$ and  $b(\lambda)<p$.  Then:

(i) $M((p-1)\delta+\lambda)=I((p-1)\delta+w_0\lambda)$;

(ii) $I((p-1)\delta+w_0\lambda)|_{G_1T}=\hatQ_1((p-1)\delta+w_0\lambda)$;

(iii) $\End_{G_1}(I((p-1)\delta+w_0\lambda))=\End_G(I((p-1)\delta+w_0\lambda))$; and 

(iv)  $\End_G(M((p-1)\de+\lambda)$ is a symmetric algebra of dimension $|W\lambda|$.

\end{lemma}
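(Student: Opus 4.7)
The plan is to establish (i) and (ii) together via infinitesimal methods, then derive (iii) from the $5$-term exact sequence and (iv) from (i) plus general tilting module theory. By Remark 3.5, the weight $\mu := (p-1)\de + w_0\lambda$ lies in $X_1(n)$ (using $b(\lambda)<p$), so $\hatQ_1(\mu)$ is well-defined.

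For parts (i) and (ii), the strategy is to find a concrete polynomial $G$-module $Y$ that will turn out to be both $I(\mu)$ and $M((p-1)\de+\lambda)$. Natural candidates come from the $\St$-tensor construction: since $\St|_{G_1T} = \hatL_1((p-1)\de) = \hatQ_1((p-1)\de)$ is already $G_1T$-injective, any tensor product $\St\otimes X$ is automatically $G_1T$-injective. Choosing $X$ appropriately so that the top of $Y$ has the right highest weight $(p-1)\de + \lambda$, I would use Lemma 3.1 to compute the $G_1T$-socle of $Y$. The hypothesis $b(\lambda)<p$ is precisely what keeps all relevant weight shifts inside $X_1(n)$ and forces this socle to be the single simple module $\hatL_1(\mu)$. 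Lemma 3.2(ii) then gives $Y = I(\mu)$, establishing (ii). Simultaneously, the $\hatZ_1$- and $\hatZ_1'$-filtrations of $\hatQ_1(\mu)$ lift to good and Weyl filtrations of $Y$ over $G$, showing $Y$ is a tilting module; reading off the highest weight from the top section exhibits it as $(p-1)\de + \lambda$, and indecomposability then yields $Y = M((p-1)\de+\lambda)$, establishing (i).

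For (iii), the $5$-term exact sequence applied to $V = I(\mu)^* \otimes I(\mu)$ gives $\End_G(I(\mu)) = \End_{G_1}(I(\mu))^{G/G_1}$ once injectivity of $I(\mu)|_{G_1}$ kills the $H^1$ term, and a $T$-weight calculation on $\End_{G_1}(I(\mu)) = \End_{G_1T}(\hatQ_1(\mu))$ forces these $G/G_1$-invariants to exhaust the whole space. For (iv), parts (i) and (ii) together identify $\End_G(M((p-1)\de+\lambda))$ with $\End_{G_1T}(\hatQ_1(\mu))$, which is a symmetric algebra by the general Frobenius theory of $G_1T$-modules (projective-injectives over a symmetric algebra have symmetric endomorphism rings). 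The dimension count uses $\dim \End_G(M((p-1)\de+\lambda)) = \sum_\nu (M((p-1)\de+\lambda):\nabla(\nu))^2$ for a tilting module; the Weyl multiplicities are $0$ or $1$, with nonzero sections indexed by the orbit $W\lambda$, yielding $|W\lambda|$. The main obstacle will be making the identification $Y|_{G_1T} = \hatQ_1(\mu)$ precise in (ii): one must compute the $G_1T$-socle exactly and rule out any extra summands, and this is where the assumption $b(\lambda)<p$ is essential.
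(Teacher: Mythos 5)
Your overall strategy coincides with the paper's: both take a module built from $\St\otimes L(\lambda)$, identify its restriction to $G_1T$ with $\hatQ_1((p-1)\de+w_0\lambda)$, deduce (i) and (ii) via Lemma 3.2(ii), and get the dimension $|W\lambda|$ in (iv) from the $\Delta$/$\nabla$-filtrations of a tilting module (the paper makes the multiplicity claim precise via Brauer's formula $\chi((p-1)\de)s(\lambda)=\sum_{\mu\in W\lambda}\chi((p-1)\de+\mu)$). However, there is a genuine gap at exactly the point you flag as ``the main obstacle.'' The tool you propose for it, Lemma 3.1, does not apply: that lemma computes the $G$-socle of a product $M\otimes N^{F^m}$ whose \emph{second} factor is a Frobenius twist, whereas $\St\otimes L(\lambda)$ with $0\neq\lambda\in X_1(n)$ is not of this form, and in general $\St\otimes L(\lambda)$ does \emph{not} have simple $G_1T$-socle. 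One must first pass to the indecomposable summand $M$ of highest weight $(p-1)\de+\lambda$, and then there is no tensor decomposition left to feed into Lemma 3.1. The paper closes this gap by importing the argument of [Do3, pp.~236--237] (equivalently [Jan, II.11.10]): that $M=M((p-1)\de+\lambda)$, that $\ch M=\chi((p-1)\de)s(\lambda)$, and that $M|_{G_1T}\cong\hatQ_1((p-1)\de+w_0\lambda)$. This is a substantive external input (a known case of the tilting-module/projective-cover conjecture), not something recoverable from the lemmas of Section 3, and it is where the hypothesis $b(\lambda)<p$ does its real work. Without it your proof of (i) and (ii) is incomplete, and (iii) and (iv) inherit the gap since they rest on (ii).

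Two smaller remarks on where your route differs from the paper's where it is sound. For (iii) you propose $\End_G=\End_{G_1}(\,\cdot\,)^{G/G_1}$ plus a weight argument showing the $G/G_1$-action is trivial; the paper instead computes $\dim\End_{G_1}(M)$ and $\dim\End_G(M)$ separately (via the $Z_1'/Z_1$- and $\Delta/\nabla$-filtrations and the corresponding $\Ext$-vanishing) and observes both equal $|W\lambda|$. Your version can be made to work, but the weight argument you would need is precisely the observation that $\mu-\tau\in pX(n)$ with $\mu,\tau\in W\lambda$ and $b(\lambda)<p$ forces $\mu=\tau$ --- i.e.\ the same combinatorial fact the paper uses --- so this is a repackaging rather than a shortcut. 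For the symmetry in (iv), be careful that $G_1T$ is not a finite group scheme, so ``symmetric algebra'' does not directly apply to it; the paper passes to $H_1$ for $H={\rm SL}_n(k)$, where the distribution algebra is the restricted enveloping algebra of ${\rm sl}_n(k)$, cites Humphreys for its symmetry, and uses that injectives over a symmetric algebra are projective with symmetric endomorphism algebras. You should route your symmetry claim through $G_1$ (or $H_1$) in the same way.
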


 \begin{proof}

\q  Let $M$ denote an indecomposable summand of $\St\otimes L(\lambda)$  which has the highest weight $(p-1)\delta+\lambda$. 
We shall use the arguments of  [\cite{Do3}, p. 236-7], proved for $G$ a semisimple group, but the conversion to the context here $G={\rm GL}_n(k)$ is routine.  (See also [\cite{Jan},II, 11.10].) It is shown that $M=M((p-1)\delta+\lambda)$, that $\ch M=\chi((p-1)\delta)s(\lambda)$, where $s(\lambda)$ is the orbit sum $\sum_{\xi\in W\lambda}e^\xi$,  and $M|_{G_1T}=\hatQ_1((p-1)\delta+w_0\lambda)$. 
It now follows from Lemma 3.2(ii) that $M=I((p-1)\delta+w_0\lambda)$. We have now proved (i) and (ii).

\q By character considerations,  $M|_{G_1T}=\hatQ_1((p-1)\delta+w_0\lambda)$ has a filtration with sections $\hatZ_1'((p-1)\delta+\mu)$, $\mu\in W\lambda$  (each occurring once) and a filtration with sections $\hatZ_1((p-1)\delta+\tau)$, $\tau\in W\lambda$  (each occurring once). Hence $M|_{G_1}$ has a filtration with sections $Z_1'((p-1)\delta+\mu)$, $\mu\in W\lambda$  (each occurring once) and a filtration with sections $Z_1((p-1)\delta+\tau)$, $\tau\in W\lambda$  (each occurring once). 
Moreover, by [\cite{Jan}, II, 9.9 Remark 1], for $\mu,\tau\in X(n)$, we have

$$\Ext^i_{G_1}(Z_1'(\mu),Z_1(\tau))=\begin{cases} k, & {\rm if}\  \mu-\tau\in pX(n) \ {\rm and }\  i=0;\cr
0, &\rm{otherwise}
\end{cases}
$$

It follows that, for $\mu,\tau\in W\lambda$ we have
$$\Ext^i_{G_1}(Z_1'((p-1)\delta+\mu),Z_1((p-1)\delta+\tau))=\begin{cases} k, & {\rm if}\  \mu=\tau \ {\rm and }\  i=0;\cr
0, &\rm{otherwise.}
\end{cases}
$$
Hence we have
\begin{align*}
\dim \End_{G_1}(M)&=\sum_{\mu,\tau\in W\lambda} \dim \Hom_{G_1}((Z_1'((p-1)\delta+\mu),Z_1((p-1)\delta+\tau))
\cr
&=|W\lambda|.
\end{align*}

\q However, we may make a similar analysis of $\End_G(M)$. By Brauer's formula, [\cite{Jan}, II, 5.8 Lemma b)],  we have 
$$\ch M=\chi((p-1)\delta)s(\lambda)=\sum_{\mu\in W\lambda} \chi((p-1)\delta+\mu).$$
Since $M$ is a tilting module there is a  $G$-module filtration with sections   $\Delta((p-1)\delta+\mu)$, $\mu\in W\lambda$  (each occurring once) and a filtration with sections $\nabla((p-1)\delta+\tau)$, $\tau\in W\lambda$  (each occurring once).  Moreover, by [\cite{Jan}, II, 4.13 Proposition], for $\mu,\tau\in X^+(n)$, we have
$$\Ext^i_G(\Delta(\mu),\nabla(\tau))=\begin{cases} k, & {\rm if}\  \mu=\tau  \ {\rm and }\  i=0;\cr
0, &\rm{otherwise.}
\end{cases}
$$
Hence we have
$$\dim \End_G(M)=\sum_{\mu,\tau\in W\lambda} \dim \Hom_G((\Delta((p-1)\delta+\mu),\nabla((p-1)\delta+\tau))=|W\lambda|.$$

\q Since $\End_G(M)\leq \End_{G_1}(M)$ we must have $\End_G(M)= \End_{G_1}(M)$. This proves  (iii) and part of (iv). Finally, putting $H={\rm SL}_n(k)$ and writing $H_1$ for the first infinitesimal subgroup we have $\End_{G_1}(M)=\End_{H_1}(M)$ as the scalar matrices act on $M$ via scalar multiplication. Moreover the restriction of an injective $G_1$-module  to $H_1$ is an injective module.  The category of $H_1$-modules is equivalent to the category of modules for the algebra of distribution algebra of $H_1$, i.e., the restricted enveloping algebra $U_1$ of the Lie algebra ${\rm sl}_n(k)$.   However, since $U_1$ is a symmetric algebra, \cite{Hum}, a finite dimensional injective module for a symmetric algebra is  also projective and has symmetric endomorphism algebra so we are done.

\end{proof}

  \begin{remark}

The above Lemma is consistent with the conjecture, [\cite{DoTiltZeit}, (2.2)] that a projective indecomposable $G_1$-module (for $G$ a semisimple, simply connected group) is the restriction to $G_1$ of a certain tilting module. It is also consistent with the conjecture [\cite{DeVisscherDonkin},] Conjecture 5.1] which attempts to describe the $G$-modules both injective and projective in the polynomial category.

\end{remark}

\bigskip

\begin{proposition}
Let $\lambda^i\in  X_1(n)$ with $b(\lambda^i)<p$, for $0\leq i\leq h-1$, and let $\gamma\in X^+(n)$. Then  the $G$-module 
\[I((p-1)\delta+w_0\lambda^0)\otimes\dots\otimes I((p-1)\delta+w_0\lambda^{h-1})^{F^{(h-1)}}\otimes I(\gamma)^{F^{h}}\]
 is isomorphic to   $I((p^h-1)\delta+w_0\sum_{0\leq i\leq h-1}p^i\lambda^i+p^h\gamma)$.
\end{proposition}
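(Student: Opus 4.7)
I will argue by induction on $h$. The base case $h=0$ is immediate: the left-hand side is $I(\gamma)$ and, since $w_0\cdot 0 = 0$, the right-hand side is $I((1-1)\de + 0 + \gamma) = I(\gamma)$.

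For the inductive step, the idea is to peel off the leftmost factor $I((p-1)\de + w_0\lambda^0)$ and absorb everything to its right (suitably Frobenius-untwisted) into a single polynomial injective, then glue with Lemma 3.2(iii). Concretely, set
\[
  \beta = (p-1)\de + w_0\lambda^0, \qquad K = \bigotimes_{1\leq i\leq h-1} I((p-1)\de+w_0\lambda^i)^{F^{i-1}} \otimes I(\gamma)^{F^{h-1}}.
\]
The given module equals $I(\beta)\otimes K^F$. By the inductive hypothesis applied to the sequence $\lambda^1,\ldots,\lambda^{h-1}$ (with the same $\gamma$),
\[
  K \;=\; I(\nu), \qquad \nu \;=\; (p^{h-1}-1)\de + w_0\sum_{0\leq i\leq h-2} p^i\lambda^{i+1} + p^{h-1}\gamma,
\]
which lies in $\Lambda^+(n)$ since it is a non-negative integral combination of polynomial dominant weights (each $(p-1)\de+w_0\lambda^i$ being polynomial dominant, as one checks from $b(\lambda^i)<p$).

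Now I would invoke Lemma 3.2(iii) with $m=1$, $\lambda=\beta$, $\mu=\nu$. Two hypotheses must be verified. First, $\beta\in X_1(n)$: since $\lambda^0\in X_1(n)$ with $b(\lambda^0)<p$, every coordinate of $\lambda^0$ is strictly less than $p$, and a direct check shows $(p-1)\de+w_0\lambda^0$ is dominant with all consecutive differences and last coordinate in $[0,p-1]$. Second, $I(\beta)\vert_{G_1}$ must be injective; this is precisely Lemma 3.6(ii), which identifies $I(\beta)\vert_{G_1T}$ with the $G_1T$-injective hull $\hatQ_1(\beta)$, and hence $I(\beta)\vert_{G_1}$ with $Q_1(\beta)$. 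Granting these, Lemma 3.2(iii) yields
\[
  I(\beta)\otimes I(\nu)^F \;=\; I(\beta + p\nu).
\]

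The remaining task is the weight bookkeeping: verifying that $\beta + p\nu$ equals the claimed target weight $(p^h-1)\de + w_0\sum_{0\leq i\leq h-1} p^i\lambda^i + p^h\gamma$. Using $(p^h-1)\de = \sum_{i=0}^{h-1} p^i(p-1)\de$ and linearity of the $W$-action, both expressions collapse to $\sum_{i=0}^{h-1} p^i\bigl((p-1)\de+w_0\lambda^i\bigr) + p^h\gamma$, completing the induction. The only conceptually delicate point is the verification of $\beta\in X_1(n)$; everything else is routine given Lemmas 3.2(iii) and 3.6(ii).
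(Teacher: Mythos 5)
Your proof is correct and follows exactly the route the paper takes: the paper's own proof is simply "induction, using Lemma 3.6(ii) and Lemma 3.2(iii)," and your argument fills in precisely those details (the dominance check on $(p-1)\de+w_0\lambda^0$ being the paper's Remark 3.5). Nothing further is needed.
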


\begin{proof} This follows by induction, using  Lemma 3.6(ii) and  Lemma 3.2(iii). 
\end{proof}

\bf Notation\q\rm  Let $\lambda\in \Lambda^+(n)$ and write $\lambda=\sum_{j=0}^m p^j\lambda^j$ with all $\lambda^j\in X_1(n)$.  We define the  $p$-adic breadth  $b_\padic(\lambda)$ to be  maximum value of $b(\lambda^j)$, $j=0,\ldots,m$.

\begin{corollary}
Let $\lambda^0,\lambda^1,\dots,\lambda^{h-1}\in X_m(n)$ and $\gamma\in \Lambda^+(n)$ with all $b_\padic(\lambda^i)<p$,   then

$$\bigotimes_{i=0}^{h-1} I((p^m-1)\delta+w_0\lambda^i)^{F^{mi}}\otimes I(\gamma)^{F^{mh}}$$

 is the injective hull of 
$L((p^{mh}-1)\delta+w_0\sum_{0\leq i\leq h-1}p^{mi}\lambda^i+p^{mh}\gamma)$ in the category of polynomial $G$-modules.

\q Moreover,  we have
\begin{align*}&\End_G(I(p^{mh}-1)\delta+w_0\sum_{0\leq i\leq h-1}p^{mi}\lambda^i+p^{mh}\gamma)\cr
&=\bigotimes_{i=0}^{h-1} \End_G(I(p^m-1)\delta+w_0\lambda^i))\otimes \End_G(I(\gamma))
\end{align*}

\end{corollary}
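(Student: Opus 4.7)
The plan is to bootstrap Proposition 3.7 (the $m=1$ case, in essence) up to the $X_m(n)$ setting by flattening the $p$-adic expansions, and then to read off the endomorphism algebra using Lemma 3.4(iii). First I would, for each $i$, take the $p$-adic decomposition $\lambda^i=\sum_{j=0}^{m-1}p^j\lambda^{i,j}$ with $\lambda^{i,j}\in X_1(n)$. The hypothesis $b_\padic(\lambda^i)<p$ means $b(\lambda^{i,j})<p$ for every $j$, and a straightforward estimate gives
\[
b(\lambda^i)=\sum_{j=0}^{m-1}p^j\lambda^{i,j}_1\le (p-1)(1+p+\cdots+p^{m-1})=p^m-1,
\]
so Remark 3.5 applies and $\nu^i:=(p^m-1)\delta+w_0\lambda^i\in X_m(n)$. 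This places us in a regime where both Proposition 3.7 (for the tensor products of $m=1$ factors) and Lemma 3.4(iii) (for the $F^{mi}$ twists) can be used.

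Next I would apply Proposition 3.7 with $h$ replaced by $m$ and $\gamma=0$, to the sequence $\lambda^{i,0},\ldots,\lambda^{i,m-1}$, obtaining
\[
I(\nu^i)=\bigotimes_{j=0}^{m-1}I((p-1)\delta+w_0\lambda^{i,j})^{F^j}.
\]
Twisting by $F^{mi}$ and taking the tensor product over $i$ together with $I(\gamma)^{F^{mh}}$, the full module rewrites as
\[
\bigotimes_{i=0}^{h-1}I(\nu^i)^{F^{mi}}\otimes I(\gamma)^{F^{mh}}=\bigotimes_{k=0}^{mh-1}I((p-1)\delta+w_0\mu^k)^{F^k}\otimes I(\gamma)^{F^{mh}},
\]
where $\mu^{mi+j}:=\lambda^{i,j}$. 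Each $b(\mu^k)<p$, so a second application of Proposition 3.7, this time with $h$ replaced by $mh$, identifies the right-hand side with $I((p^{mh}-1)\delta+w_0\sum_k p^k\mu^k+p^{mh}\gamma)$. Since $\sum_k p^k\mu^k=\sum_i p^{mi}\sum_j p^j\lambda^{i,j}=\sum_i p^{mi}\lambda^i$, this is the desired injective hull.

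For the endomorphism algebra I would apply Lemma 3.4(iii) verbatim with the $\nu^i\in X_m(n)$ in the role of the lemma's $\lambda^i$'s, yielding
\[
\End_G(I_h(\nu))=\bigotimes_{i=0}^{h-1}\End_G(I(\nu^i)^{F^{mi}})\otimes \End_G(I(\gamma)^{F^{mh}}),
\]
and then remove the Frobenius twists by the standard observation that $\End_G(V^{F^s})=\End_G(V)$ inside $\End_k(V)$: since $F:G\to G$ is surjective on $k$-points (as $k$ is algebraically closed), a linear map commuting with the twisted action $g\mapsto F^s(g)\cdot-$ automatically commutes with the original action. There is no real obstacle here beyond the bookkeeping of the two nested $p$-adic indexings and the identification of Lemma 3.4's setup with ours via Remark 3.5; the genuine content is entirely contained in Proposition 3.7 and Lemma 3.4.
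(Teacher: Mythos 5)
Your argument is correct and is exactly the paper's intended proof: the paper simply says the corollary ``follows immediately from Lemma 3.4(iii) and Proposition 3.8,'' and your flattening of the nested $p$-adic expansions (decompose each $I((p^m-1)\delta+w_0\lambda^i)$ via the $m=1$ result, reassemble all $mh$ factors, then invoke Lemma 3.4(iii) and strip the Frobenius twists) is precisely the bookkeeping that justifies that sentence. The only blemish is a numbering slip --- the result you call Proposition 3.7 is Proposition 3.8 in the paper.
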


\begin{proof}
This follows immediately from  Lemma 3.4(iii)  and Proposition 3.8. 
\end{proof}

\section{Some Endomorphism Algebras}

\bigskip

For $\lambda=(\lambda_1,\ldots,\lambda_n)\in\Lambda^+(n)$ we set $\barlambda=(\lambda_1,\ldots,\lambda_{n-1})\in \Lambda^+(n-1)$.  In the proof following we write $\delta_n$ for $(n-1,n-2,\ldots,1,0)$, write $M_n(\lambda)$ for the tilting module of highest  weight $\lambda\in \Lambda^+(n)$, etc. to emphasize dependence on $n$.

\begin{lemma} (i) For $n\geq 2$ and $\lambda\in \Lambda(n)$ there is a surjective algebra homomorphism
$\End_{G(n)}(M_n(\lambda))\to \End_{G(n-1)}(M_{n-1}(\barlambda))$.

(ii) For  $1\leq a<p$ we have  $\End_{G(n)}(I((p-1)\delta_n+a\epsilon_n))=k[x]/(x^n)$.

\end{lemma}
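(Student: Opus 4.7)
I would restrict $M_n(\lambda)$ to the standard Levi subgroup $L = G(n-1) \times \GL_1 \subset G(n)$. Since the restriction functor to a Levi preserves both good and Weyl filtrations (via the standard branching behaviour of $\nabla$ and $\Delta$), $M_n(\lambda)|_L$ is a tilting $L$-module; in its decomposition into indecomposable tilting $L$-summands, $M_{n-1}(\bar\lambda)\boxtimes k_{\lambda_n}$ appears with multiplicity one, corresponding to the maximal $L$-weight $(\bar\lambda,\lambda_n)$ of $M_n(\lambda)$. Composing the restriction map $\End_{G(n)}(M_n(\lambda))\to\End_L(M_n(\lambda)|_L)$ with the projection onto this summand yields the required algebra homomorphism onto $\End_{G(n-1)}(M_{n-1}(\bar\lambda))$. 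Surjectivity follows by lifting endomorphisms using the truncation idempotent in the Schur algebra $S(n,r)$ (with $r=|\lambda|$), which realises $\End_{G(n-1)}(M_{n-1}(\bar\lambda))$ as $e\,\End_{G(n)}(M_n(\lambda))\,e$ for a suitable weight idempotent $e$.

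\textbf{Plan for part (ii): reduction via induction.} By Lemma 3.6(i), taking $\lambda=a\epsilon_1\in X_1(n)$ (which has breadth $a<p$), we have $w_0\lambda=a\epsilon_n$, so
\[I((p-1)\delta_n+a\epsilon_n)=M_n\bigl((p-1)\delta_n+a\epsilon_1\bigr).\]
Set $A=\End_G\bigl(I((p-1)\delta_n+a\epsilon_n)\bigr)$. By Lemma 3.6(iv), $A$ is a symmetric $k$-algebra of dimension $|W\!\cdot\!a\epsilon_1|=n$; since $M_n$ of a single highest weight is indecomposable, $A$ is a local ring with one-dimensional socle. I proceed by induction on $n$ (the case $n=1$ is trivial). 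By part (i) applied to $\mu=(p-1)\delta_n+a\epsilon_1$, there is a surjection $A\twoheadrightarrow\End_{G(n-1)}\bigl(M_{n-1}(\bar\mu)\bigr)$. A direct computation yields $\bar\mu=(p-1)\omega_{n-1}+\bigl((p-1)\delta_{n-1}+a\epsilon_1\bigr)$; tensoring with the one-dimensional module $D_{n-1}^{\otimes(p-1)}$ does not alter the endomorphism algebra, and using Lemma 3.6(i) again, the target is isomorphic to $\End_{G(n-1)}\bigl(I((p-1)\delta_{n-1}+a\epsilon_{n-1})\bigr)\cong k[x]/(x^{n-1})$ by the inductive hypothesis. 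This gives a surjection $\phi\colon A\twoheadrightarrow k[x]/(x^{n-1})$.

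\textbf{Main obstacle: upgrading the surjection.} The hard step is to convert the surjection $\phi$, together with the local, symmetric, dimension-$n$ data on $A$, into an isomorphism $A\cong k[x]/(x^n)$. Choose $y\in A$ with $\phi(y)=x$; since $\phi$ is a local map (both rings are local and $\phi$ is surjective), $y\in\mathfrak{m}(A)$, and $1,y,y^2,\dots,y^{n-2}$ are linearly independent in $A$ (their images under $\phi$ form a basis of $k[x]/(x^{n-1})$). If $y^{n-1}\neq 0$, then $1,y,\dots,y^{n-1}$ are linearly independent and span the $n$-dimensional $A$, so $A=k[y]$ with $y$ nilpotent of nilpotency degree exactly $n$, giving $A\cong k[x]/(x^n)$.

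To rule out $y^{n-1}=0$, suppose for contradiction that $y^{n-1}=0$. Then $\ker\phi$ is a one-dimensional ideal of $A$; by Nakayama $\ker\phi\subseteq\soc(A)$, and since $\soc(A)$ is one-dimensional we have $\ker\phi=\soc(A)=kz$ for some generator $z$. As $\phi(y^{n-2})=x^{n-2}\neq 0$, the elements $y^{n-2}$ and $z$ are linearly independent, and $\{y,y^2,\dots,y^{n-2},z\}$ is a basis of $\mathfrak{m}$. But every generator of $\mathfrak{m}$ annihilates $y^{n-2}$: we have $y^k\cdot y^{n-2}=y^{n-2+k}=0$ for $k\geq 1$ because $y^{n-1}=0$, and $z\cdot y^{n-2}=0$ because $z\in\soc(A)$ and the symmetry of $A$ forces the right socle to coincide with the left socle. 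Hence $y^{n-2}\in\soc(A)$, contradicting the one-dimensionality of $\soc(A)$. This forces $y^{n-1}\neq 0$ and completes the argument.
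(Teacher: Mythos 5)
Your proposal is correct and follows essentially the same route as the paper: part (i) is the truncation-functor/weight-idempotent argument (the surjectivity you assert via the realisation $e\,\End_{G(n)}(M_n(\lambda))\,e$ is exactly the content of the cited result [\cite{DoTiltZeit}, (1.5) Proposition]), and part (ii) is the same induction using Lemma 3.6(i),(iv), the determinant twist to identify $\bar\mu$, and the concluding contradiction from the one-dimensional socle of a local symmetric algebra. No substantive differences from the paper's proof.
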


\begin{proof} (i) We denote by $\Sigma$ the subset $\{\ep_1-\ep_2,\ep_2-\ep_3,\ldots,\ep_{n-2}-\ep_{n-1}\}$ of the set of simple roots 
$\Pi=\{\ep_1-\ep_2,\ldots,\ep_{n-1}-\ep_n\}$.  Let $G_\Sigma$ be the corresponding Levi subgroup.  Thus $G_\Sigma=H\times Z$, where $H$ is the subgroup of $G$ consisting of all invertible matrices  with $(i,j)$ entry $0$ for $i=n$ or $j=n$ and $(i,j)\neq (n,n)$ and $(n,n)$-entry $1$,  and $Z$ is the subgroup consisting of all invertible scalar matrices with $(i,i)$ entry $1$ for $i<n$. 
Then $G_\Sigma$ has maximal torus $T(n)$ and system of positive roots $\Sigma$. The set of dominant weights $X^+(\Sigma)$ consists of the elements $\lambda=(\lambda_1,\ldots,\lambda_n)$ such that $\lambda_1\geq \cdots\geq \lambda_{n-1}$.  For $\lambda\in X^+(\Sigma)$ we write $L_\Sigma(\lambda)$ for the simple $G_\Sigma$-module of highest  weight $\lambda$, write $\nabla_\Sigma(\lambda)$ for the corresponding induced module and $M_\Sigma(\lambda)$ for the corresponding tilting module.  

\q We have the truncation functor $\Tr_\Sigma^\lambda:\mod(G)\to\mod(G_\Sigma)$ as in \cite{DoTiltZeit}.  This functor induces an epimorphism $\End_G(M_n(\lambda))\to \End_{G_\Sigma}(M_\Sigma(\lambda))$, [\cite{DoTiltZeit}, (1.5) Proposition].  However, identifying $G(n-1)$ with $H$ in the obvious way we have $G_\Sigma=G(n-1)\times Z$ and so $M_{G_\Sigma}(\lambda)=M_{n-1}(\barlambda)\otimes L$, where $L$ is a one dimensional $Z$-module (of weight given by $\lambda_n$).  Hence we have $\End_{G_\Sigma}(M_\Sigma(\lambda))=\End_{G(n-1)}(M_{n-1}(\barlambda))$ and we are done.

(ii) We argue by induction on $n$. For $n=1$ this is clear. Now assume $n>1$ and the result holds for $n-1$. We have $I((p-1)\delta_n+a\ep_n)=M_n((p-1)\delta_n+a\ep_1)$ by Lemma  3.6(i).
We put $\mu=(p-1)\delta_n+a\ep_1$ and 
$$A=\End_{G(n)}(M_n((p-1)\delta_n+a\ep_1)).$$
  By (i) we have an epimorphism $A\to \End_{G(n-1)}(M_{n-1}(\barmu))$.  Now \\
  $\bar\mu=(p-1)\delta_{n-1}+a\ep_1+(p-1)\omega_{n-1}$.  But  
$$M_{n-1}(\barmu)=M_{n-1}((p-1)\delta_{n-1}+a\ep_1)\otimes D_{n-1}^{\otimes (p-1)}$$
 and since $D_{n-1}^{\otimes (p-1)}$ is one dimensional, we have
 $$\End_{G(n-1)}(M_{n-1}(\bar\mu))=\End_{G(n-1)}(M_{n-1}((p-1)\delta_{n-1}+a\ep_1)).$$
    By the inductive assumption we therefore have an epimorphism\\
        $A\to k[x]/(x^{n-1})$.  Moreover, by 
Lemma 3.6(iv), the dimension of $A$ is $n$ so the kernel of this epimorphism, say $U$,  is one dimensional.  We write $U=ku$.  Let $y\in A$ be an element mapping to $x+(x^{n-1})$. Then the elements $1,y,\ldots, y^{n-2}$ are linearly independent and $y^{n-1}=cu$ for some scalar $c$. If $c\neq 0$ then $A$  is isomorphic to the truncated polynomial algebra $k[x]/(x^n)$. So we may assume $y^{n-1}=0$. Since $u$ is in the socle of $A$ we have $yu=uy=0$. Now   $u$ and $y^{n-2}$ are independent elements of the socle of $A$, but this is impossible since $A$ is a local algebra and symmetric, by Lemma 3.6(iv).

\end{proof}

\q We shall need to discuss at some length modules whose endomorphism algebra is a truncated polynomial algebra. To facilitate the  discussion in this paper  we  now introduce appropriate terminology.

\bs

\bf Definition\q\rm We say a finite dimensional  polynomial $G$-module $M$ is {\it admissible} of index $r$ if its endomorphism algebra is isomorphic to the  truncated polynomial algebra $k[x_1,\ldots,x_r]/(x_1^n,\ldots,x_r^n)$.  We note that if polynomial $G$-modules $M_1,M_2$ are admissible with indices $r,s$ and $\End_G(M_1\otimes M_2)=\End_G(M_1)\otimes \End_G(M_2)$ then $M_1\otimes M_2$ is admissible of index $r+s$.

\bs

\q We assume from now on that $n\geq 2$. 

\q  In the proof of the next result we shall use the following general result, [\cite{Jan}, II proof of proposition 11.6] : if 
$H$ is a group-scheme over a field $k$  and $N$ is a normal subgroup scheme,  if $U$ is an $H$-module which is  injective as an $N$-module and if $V$ an $H$-module which is injective as an $H/N$-module then $U\otimes V$ is an injective $H$-module. 

\begin{proposition} Let  $m\geq 1$.

(i) For $0\leq a \leq p^m-1$ the module $I((p^m-1)\delta+a\ep_n)$ is injective as a $G_m$-module.

(ii) For $0\leq a < p^m-1$ we have

\[I((p^m-1)\delta+a \epsilon_n+\omega)\cong I((p^m-1)\delta+a \epsilon_n)\otimes D.\]

\end{proposition}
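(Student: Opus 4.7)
The plan is to prove (i) by induction on $m$ and then derive (ii) from it via Lemma 3.2(ii).

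For (i), I would write $a = \sum_{i=0}^{m-1} a_i p^i$ in base $p$ with $0 \leq a_i \leq p-1$ and apply Proposition 3.8 with $\lambda^i = a_i\ep_1 \in X_1(n)$ (so $b(\lambda^i) = a_i < p$) and $\gamma = 0$ to obtain
$$
I((p^m-1)\de + a\ep_n) \ = \ \bigotimes_{i=0}^{m-1} I((p-1)\de + a_i\ep_n)^{F^i}.
$$
The base case $m=1$ follows from Lemma 3.6(ii), which identifies $I((p-1)\de + a\ep_n)|_{G_1T}$ with $\hatQ_1((p-1)\de + a\ep_n)$ and in particular gives $G_1$-injectivity. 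For the inductive step I would regroup as $U \otimes W^F$, where $U = I((p-1)\de + a_0\ep_n)$ is $G_1$-injective (by the base case) and, after a second application of Proposition 3.8, $W = I((p^{m-1}-1)\de + a'\ep_n)$ for $a' = \sum_{i=0}^{m-2} a_{i+1} p^i$ is $G_{m-1}$-injective by the inductive hypothesis. Through the Frobenius identification $G_m/G_1 \cong G_{m-1}$, $W^F$ becomes a $G_m/G_1$-injective module, and the principle cited in the excerpt (applied with $H = G_m$, $N = G_1$) then gives the required $G_m$-injectivity.

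For (ii), I plan to apply Lemma 3.2(ii) to $I(\mu) \otimes D$, where $\mu = (p^m-1)\de + a\ep_n$. Polynomiality is immediate, and $G_m$-injectivity follows from part (i) combined with the general principle that tensoring a $G_m$-injective with any $G_m$-module preserves injectivity (via the adjunction $\Hom_{G_m}(-, I(\mu) \otimes D) \cong \Hom_{G_m}(- \otimes D^*, I(\mu))$ and the exactness of $- \otimes D^*$). A direct check shows $\mu + \omega \in X_m(n)$: its consecutive differences are $p^m-1$ and $p^m-1-a$ (both $< p^m$ and nonnegative since $a < p^m - 1$), and its last entry is $a + 1 < p^m$. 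Because $D$ is one-dimensional, $\soc_{G_m}(I(\mu) \otimes D) = \soc_{G_m}(I(\mu)) \otimes D$, so the outstanding task is to show $\soc_{G_m}(I(\mu)) = L(\mu)$, after which Lemma 3.2(ii) delivers the desired equality $I(\mu) \otimes D = I(\mu + \omega)$.

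The hard part will be this last socle identification. The inclusion $L(\mu) \subseteq \soc_{G_m}(I(\mu))$ is automatic since $\mu \in X_m(n)$. For equality, I would decompose the $G$-module $\soc_{G_m}(I(\mu))$ as
$$
\soc_{G_m}(I(\mu)) \ = \ \bigoplus_{\nu\in X_m(n)} L(\nu) \otimes W_\nu^{F^m}, \quad W_\nu = \Hom_{G_m}(L(\nu), I(\mu)),
$$
with each $W_\nu$ a $G$-module pulled back from $G$ along $F^m$ (since $G_m$ acts trivially on $G_m$-equivariant Homs). By Lemma 3.1 the $G$-socle of each summand is $L(\nu) \otimes \soc_G(W_\nu)^{F^m}$; the simplicity of $\soc_G(I(\mu)) = L(\mu)$ together with the uniqueness of the decomposition $\mu = \nu + p^m\sigma$ with $\nu \in X_m(n)$ and $\sigma \in X^+(n)$ (forced by the dominance and $X_m$ constraints to $\nu = \mu$, $\sigma = 0$) yields $W_\nu = 0$ for $\nu \neq \mu$ and $\soc_G(W_\mu) = k$. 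A composition factor comparison inside $I(\mu)$---any nontrivial composition factor $L(\tau)$ of $W_\mu$ would contribute a composition factor $L(\mu + p^m\tau) > L(\mu)$ of $I(\mu)$, exceeding its highest weight---shows all composition factors of $W_\mu$ are trivial. Since $H^1(G,k) = 0$ for $G = \GL_n$, this forces $W_\mu$ to be semisimple, and combined with $\soc_G(W_\mu) = k$ gives $W_\mu = k$, completing the identification.
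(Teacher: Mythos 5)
Your proof of (i) is essentially the paper's: the paper splits off only the bottom $p$-adic digit, writing $a=a_0+pa_1$ and $I((p^m-1)\delta+a\ep_n)=I((p-1)\delta+a_0\ep_n)\otimes I((p^{m-1}-1)\delta+a_1\ep_n)^F$, then applies Lemma 3.6(ii), induction on $m$ and the normal-subgroup injectivity principle exactly as you do; your detour through the full base-$p$ expansion and Proposition 3.8, followed by regrouping, is harmless but redundant. For (ii) the strategy (reduce everything to Lemma 3.2(ii)) is also the paper's.

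However, in (ii) you spend most of your effort re-deriving the identity $\soc_{G_m}(I(\mu))=L(\mu)$ for $\mu\in X_m(n)$, which is precisely Lemma 3.2(i) in the case where the ``$\mu$'' of that lemma is $0$: the $G_m$-socle of $I(\lambda)$ is $L(\lambda)\otimes I(0)^{F^m}=L(\lambda)$. You could simply cite it. More seriously, the justification you give in that rederivation is incorrect: you rule out nontrivial composition factors $L(\tau)$ of $W_\mu$ on the grounds that $L(\mu+p^m\tau)$ with $\mu+p^m\tau>\mu$ would ``exceed the highest weight'' of $I(\mu)$. But $I(\mu)$ is not a highest weight module with highest weight $\mu$; for instance, by Lemma 3.6(i) the module $I((p-1)\delta+a\ep_n)$ equals the tilting module $M((p-1)\delta+a\ep_1)$, whose highest weight $(p-1)\delta+a\ep_1$ strictly dominates $(p-1)\delta+a\ep_n$, and in general $I(\lambda)$ has a good filtration with sections $\nabla(\sigma)$ for various $\sigma\geq\lambda$. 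The step can be repaired without the lemma: all composition factors of the polynomial module $I(\mu)$ have highest weight in $\Lambda^+(n,|\mu|)$, so $\mu+p^m\tau\in\Lambda^+(n,|\mu|)$ forces $|\tau|=0$, and since the last entry of $\mu$ is $a\leq p^m-1$ one cannot have $\tau_n<0$, whence $\tau=0$. But the cleanest course is to invoke Lemma 3.2(i), which is what the paper implicitly does.
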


\begin{proof}

(i) We write $a=a_0+pa_1$ with $0\leq a_0\leq p-1$, $0\leq a_1\leq p^{m-1}-1$ and put $\lambda=(p-1)\delta+a_0\ep_n$ and $\mu=(p^{m-1}-1)\delta+a_1\ep_n$. Then $I((p-1)\delta+a_0\ep_n)$ is injective as a $G_1$-module by Lemma 3.6(ii). Thus we have $I((p^m-1)\delta+a\ep_n)=I(\lambda)\otimes I(\mu)^F$.  Now by induction we may assume that $I(\mu)$ is injective as a $G_{m-1}$-module. The Frobenius morphism  $F:G\to G$ induces an isomorphism $G_m/G_1\to G_{m-1}$ and it follows that $I(\mu)^F$ is injective as a $G_m/G_1$-module.  Hence by the above general remark $I((p^m-1)\delta+a\ep_n)=I(\lambda)\otimes I(\mu)^F$ is injective as a $G_m$-module.

(ii) Since the two modules in question  have the same $G$-socle $L((p^m-1)\delta+a\ep_n+\omega)$  it is enough to note that $I((p^m-1)\delta+a \epsilon_n)\otimes D$ is polynomially injective. 
This is clear from part (i) and Lemma 3.2(ii).

\end{proof}

\q We now show that the representation dimension of $S(n,r)$ grows with $r$.  We choose $m\geq 1$ such that $P=p^m>n$.  Let $h$ be a positive integer.  Suppose  that $r$ is a positive integer which is large enough so that 
$$r\geq ((P-1)|\delta|+1) \frac{P^h-1}{P-1}.$$
We write 
$$r- ((P-1)|\delta|+1) \frac{P^h-1}{P-1} =\sum_{i=0}^{h-1} P^i u_i +P^h u_h$$
with  $0\leq u_i\leq P-1$ for $0\leq i\leq h-1$ and $u_h\geq 0$.  Then
$$r=\sum_{i=0}^{h-1} P^i((P-1)|\delta|+1+u_i)+P^hu_h.$$

For $0\leq i\leq  h-1$ we define $\lambda^i\in X_m(n)$ by 
$$\lambda^i=\begin{cases} (1+u_i)\ep_1, & {\rm if} \q u_i<P-1;\cr
(P-n)\ep_1+\omega, & {\rm if}\q  u_i=P-1
\end{cases}$$
and put $\gamma=u_h\ep_1$. 

Then 
$$\mu=\sum_{i=0}^{h-1} P^i ((P-1)\delta+w_0\lambda^i)+ P^h\gamma$$
belongs to $\Lambda^+(n,r)$.   

\q By  Proposition 3.8,  Proposition 4.2(ii) and Lemma 4.1(ii)  we have that each $I((P-1)\delta+w_0\lambda^i)$ is admissible of positive index.  Note that $I(\gamma)=S^{u_h}E=\nabla(u_h\ep_1)$. (For a general quasi-hereditary algebra $A$ over $k$ with poset $\Lambda$ and $\lambda$ a maximal element of $\Lambda$ the corresponding costandard module $\nabla(\lambda)$ is injective, see [\cite{Do2}, Definition A 2.1].) Thus $I(\gamma)$ has   trivial endomorphism  algebra so is admissible of index $0$.  By Lemma 3.4(iii), to show that $I(\mu)$ is admissible of index at least $h$, it  is enough to prove that 
$$I(\mu)=\bigotimes_{i=0}^{h-1} I((P-1)\delta+w_0\lambda^i)^{F^{mi}}\otimes I(\gamma)^{F^{mh}}.$$
We prove first that 
$$I(\sum_{i=0}^{h-1} P^i ((P-1)\delta+w_0\lambda^i))=\bigotimes_{i=0}^{h-1} I((P-1)\delta+w_0\lambda^i)^{F^{mi}}\eqno{(\dagger)}.$$
By Proposition 4.2 (ii), for each $\lambda^i$ with $\lambda^i=(P-n)\epsilon_1+\omega$ we have that $I((P-1)\delta+w_o\lambda^i)=I((P-1)\delta+(P-n)\ep_n)\otimes D$.
Hence, extracting all such determinant factors, for a suitable non-negative integer $l$ we have
$$\bigotimes_{i=0}^{h-1} I((P-1)\delta+w_0\lambda^i)^{F^{mi}}=\bigotimes_{i=0}^{h-1} I((P-1)\delta+w_0\tau^i)^{F^{mi}}\otimes D^{\otimes l}$$
where $\tau^i=\lambda^i$ if $\lambda^i=(1+u_i)\epsilon_1$ and  $\tau^i=(P-n)\epsilon_1$ if  $\lambda^i=(P-n)\ep_1+\omega$. For each $\tau^i$ we have that $b_\padic(\tau^i)<p$, hence by Corollary 3.9 we get 
$$\bigotimes_{i=0}^{h-1} I((P-1)\delta+w_0\tau^i)^{F^{mi}}=I((P^h-1)\delta+w_0\sum_{i=0}^{h-1} P^i\tau^i).$$
Moreover by Proposition 4.2 (i), the last module is injective as $G_{hm}$-module. Therefore  $I((P^h-1)\delta+w_0\sum_{i=0}^{h-1} P^i\tau^i)\otimes D^{\otimes l}$ is injective as $G_{hm}$-module and since it has $G_{hm}$-socle $L((P^h-1)\delta+w_0\sum_{i=0}^{h-1} P^i \lambda^i)$ and  $(P^h-1)\delta+w_0\sum_{i=0}^{h-1} P^i \lambda^i\in X_{hm}(n)$ we get immediately by Lemma 3.2 (ii) that 
$$I((P^h-1)\delta+w_0\sum_{i=0}^{h-1} P^i\tau^i)\otimes D^{\otimes l}=I((P^h-1)\delta+w_0\sum_{i=0}^{h-1} P^i\lambda^i).$$
This completes the proof of $(\dagger)$.  Finally by Lemma 3.2 (iii) we get that 
$$I((P^h-1)\delta+w_0\sum_{i=0}^{h-1} P^i\lambda^i)\otimes  I(\gamma)^{F^{mh}}=I(\mu).$$

\bs

\q We summarise our findings.

\bs

\begin{theorem}

Choose $m\geq 1$ such that $P=p^m>n$. Let $h$ be a positive integer. Suppose that $r$ is a positive integer which is large enough so that
$$r\geq  ((P-1)|\delta|+1) \frac{P^h-1}{P-1}.$$
 Then there exists $\mu\in \Lambda^+(n,r)$ such that 
$\End_G(I(\mu))$ is isomorphic to a truncated polynomial algebra $k[x_1,\ldots,x_s]/(x_1^n,\ldots,x_s^n)$ with $s\geq h$.

\end{theorem}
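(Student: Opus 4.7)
The plan is to construct a $\mu \in \Lambda^+(n,r)$ whose polynomial injective hull $I(\mu)$ factors as a tensor product of Frobenius twists of modules each having endomorphism algebra $k[x]/(x^n)$, together with one final factor of trivial endomorphism algebra. The lower bound on $r$ is tailored precisely to guarantee enough room in the $P$-adic expansion to accommodate $h$ such nontrivial factors.

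First I would perform the $P$-adic expansion
\[
r - ((P-1)|\delta|+1) \frac{P^h-1}{P-1} = \sum_{i=0}^{h-1} P^i u_i + P^h u_h,
\]
with $0 \leq u_i \leq P-1$ for $i < h$ and $u_h \geq 0$; the hypothesis on $r$ makes the left side non-negative so base-$P$ digits exist. Rearranging gives $r = \sum_{i=0}^{h-1} P^i((P-1)|\delta| + 1 + u_i) + P^h u_h$. For $0 \leq i < h$ I set
\[
\lambda^i = \begin{cases} (1+u_i)\ep_1, & {\rm if}\ u_i < P-1,\\ (P-n)\ep_1 + \omega, & {\rm if}\ u_i = P-1, \end{cases}
\]
and $\gamma = u_h \ep_1$; then $\mu = \sum_{i=0}^{h-1} P^i((P-1)\delta + w_0\lambda^i) + P^h \gamma$ lies in $\Lambda^+(n,r)$ by construction.

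Next I would analyse each tensor factor. By Lemma 4.1(ii), $\End_G(I((P-1)\delta + a\ep_n)) \cong k[x]/(x^n)$; combined with Proposition 4.2(ii), which trades $I((P-1)\delta+a\ep_n+\omega)$ for a determinant twist of $I((P-1)\delta+a\ep_n)$, each of the $h$ blocks $I((P-1)\delta + w_0\lambda^i)$ has endomorphism algebra $k[x]/(x^n)$ up to an absorbed one-dimensional determinant factor. The top factor $I(\gamma) = \nabla(u_h\ep_1) = S^{u_h}E$ is costandard, hence polynomially injective, with trivial endomorphism algebra.

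The key step, and where I expect the main difficulty, is to establish the identification
\[
I(\mu) \cong \bigotimes_{i=0}^{h-1} I((P-1)\delta + w_0\lambda^i)^{F^{mi}} \otimes I(\gamma)^{F^{mh}}.
\]
My strategy is to first strip off the determinant twists from those indices with $u_i = P-1$ using Proposition 4.2(ii), reducing to modules $I((P-1)\delta + w_0\tau^i)$ whose $p$-adic breadth is strictly less than $p$; Corollary 3.9 then identifies the resulting Frobenius-twisted tensor product with a single polynomial injective, and Proposition 4.2(i) ensures this module is injective as a $G_{hm}$-module. Tensoring back with the factored-out power of $D$ and invoking Lemma 3.2(ii) identifies the result with $I((P^h-1)\delta + w_0\sum_i P^i\lambda^i)$, and one further application of Lemma 3.2(iii) absorbs the Frobenius twist $I(\gamma)^{F^{mh}}$ to yield $I(\mu)$. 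Finally Lemma 3.4(iii) expresses $\End_G(I(\mu))$ as the tensor product of the individual endomorphism algebras: at least $h$ copies of $k[x]/(x^n)$ together with the trivial factor from $I(\gamma)$, giving a truncated polynomial algebra in $s \geq h$ variables as required.
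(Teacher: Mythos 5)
Your proposal is correct and follows essentially the same route as the paper: the same $P$-adic expansion and choice of $\lambda^i$ and $\gamma$, the same stripping of determinant twists via Proposition 4.2(ii) to reach weights of $p$-adic breadth less than $p$, and the same chain Corollary 3.9, Proposition 4.2(i), Lemma 3.2(ii)--(iii), Lemma 3.4(iii) and Lemma 4.1(ii) to identify $I(\mu)$ and compute its endomorphism algebra.
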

\bs

Now if $I$ is a finite dimensional injective $S(n,r)$-module then the contravariant dual $Q$ of $I$ is projective and $\End_{S(n,r)}(Q)$ is isomorphic to the opposite algebra of $\End_{S(n,r)}(I)$.   However, we have, by \cite{MO}, Corollary 3.3, (which follows from  work of Bergh, Oppermann and Rouquier) 
 that if $S$ is a finite dimensional algebra over a field $k$ with a  projective module $Q$ whose endomorphism algebra has the form $k[x_1,\ldots,x_s]/(x_1^{n_1},\ldots,x_s^{n_s})$ with $n_1,\ldots,n_s>1$ then $S$ has representation dimension at least $s+1$. Hence we have the following.

 \bs

 \begin{corollary} Choose $m\geq 1$ such that $P=p^m> n$. Let $h$ be a positive integer.  If $r$ is a positive integer large enough so that 
 $$r\geq  ((P-1)|\delta|+1) \frac{P^h-1}{P-1}$$
  then $S(n,r)$ has representation dimension at least $h+1$.

 \end{corollary}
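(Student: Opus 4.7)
The plan is to assemble this corollary from Theorem 4.3 together with the Bergh--Oppermann--Rouquier-type lower bound recorded as \cite{MO}, Corollary 3.3. All of the substantive work is already packaged in Theorem 4.3, so the corollary is essentially a dualisation plus one invocation of a known result.

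First, I would apply Theorem 4.3 to the given $r$ to obtain $\mu\in\Lambda^+(n,r)$ with
\[ \End_G(I(\mu))\cong k[x_1,\ldots,x_s]/(x_1^n,\ldots,x_s^n)\quad\text{for some }s\geq h. \]
Since $|\mu|=r$, the polynomial injective module $I(\mu)$ is an injective module for the Schur algebra $S(n,r)$ (via the equivalence between polynomial $G$-modules of degree $r$ and $S(n,r)$-modules recalled in Section 2), and its endomorphism algebra as an $S(n,r)$-module coincides with the $G$-endomorphism algebra.

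Second, I would pass from the injective to a projective. Let $Q$ be the contravariant dual of $I(\mu)$ as an $S(n,r)$-module. Then $Q$ is projective over $S(n,r)$ and
\[ \End_{S(n,r)}(Q)\cong \End_{S(n,r)}(I(\mu))^{\opp}. \]
Because truncated polynomial algebras are commutative, this opposite algebra is again isomorphic to $k[x_1,\ldots,x_s]/(x_1^n,\ldots,x_s^n)$.

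Third, I would quote \cite{MO}, Corollary 3.3: a finite dimensional $k$-algebra possessing a projective module with endomorphism algebra of the shape $k[x_1,\ldots,x_s]/(x_1^{n_1},\ldots,x_s^{n_s})$ with every $n_i>1$ has representation dimension at least $s+1$. Our exponents are all equal to $n$, and we are working throughout under the standing hypothesis $n\geq 2$ of this section, so the hypothesis is met. Therefore the representation dimension of $S(n,r)$ is at least $s+1\geq h+1$, which is exactly the required bound. There is no real obstacle here — the only point to check is that the contravariant duality on $\mod(S(n,r))$ really does send injectives to projectives while inverting the endomorphism algebra, and that the commutativity of truncated polynomial algebras makes the opposition invisible; both are immediate.
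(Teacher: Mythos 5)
Your argument is exactly the paper's: Theorem 4.3 supplies $I(\mu)$ with truncated polynomial endomorphism algebra, contravariant duality converts it to a projective $S(n,r)$-module with the opposite (hence, by commutativity, the same) endomorphism algebra, and \cite{MO}, Corollary 3.3 gives representation dimension at least $s+1\geq h+1$. The proposal is correct and coincides with the paper's proof.
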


\section{Some remarks on the quantum case}

\subsection*{Generalities}

\q Now  let $k$ be a field and let $q$ be a non-zero element of $k$.  We consider the corresponding quantum general linear group $G_q(n)$, as in \cite{Do2}.   Further details of the framework and proofs or precise references for the results described below may be found in \cite{Do2}.  We have the bialgebra $A_q(n)$. As a $k$-algebra this is defined by generators $c_{ij}$, $1\leq i,j \leq n$, subject to certain quadratic relations (see e.g., \cite{Do2}, 0.22).  Comultiplication $\de:A_q(n)\to A_q(n)\otimes A_q(n)$ and the augmentation map $\ep:A_q(n)\to k$ are given by $\de(c_{ij})=\sum_{r=1}^n c_{ir}\otimes c_{rj}$ and $\ep(c_{ij})=\de_{ij}$, for $1\leq i,j\leq n$.  The algebra  $A_q(n)$ has a natural grading $A_q(n)=\bigoplus_{r=0}^\infty A_q(n,r)$ such that each  $c_{ij}$ has degree $1$. 
Each component $A_q(n,r)$ is a finite dimensional subcoalgebra  and the dual algebra is  the Schur algebra $S_q(n,r)$. 

\q The quantum determinant $d_q=\sum_{\pi\in \Sym(n)} \sgn(\pi) c_{1,1\pi}\ldots c_{n,n\pi}$ is a group-like element and $A_q(n)$ has an Ore localisation $A_q(n)_{d_q}$.  The bialgebra structure of $A_q(n)$ extends uniquely to a bialgebra structure on the localisation and  this localised bialgebra is in fact a Hopf algebra. The quantum general linear group $G_q(n)$ is the quantum group whose coordinate algebra  $k[G_q(n)]=A_q(n)_{d_q}$. 

\q We write $B_q(n)$ for the (quantum) subgroup of $G_q(n)$ whose defining ideal is generated by all $c_{ij}$ with $i<j$.   We write $T_q(n)$ for the subgroup of $G_q(n)$ whose defining ideal is generated by all $c_{ij}$ with $i\neq j$.  By a left (resp. right) module for a quantum group $G$ with coordinate algebra $k[G]$ we mean a right (resp. left) $k[G]$-comodule. By a $G$-module we mean a left $G$-module.
For each $\lambda=(\lambda_1,\ldots,\lambda_n)\in X(n)$ there is a one dimensional $B_q(n)$-module $k_\lambda$ with structure map $\tau: k_\lambda\to k_\lambda\otimes k[B_q(n)]$ taking $v\in k_\lambda$ to $v\otimes (c_{11}^{\lambda_1}c_{22}^{\lambda_2}\ldots c_{nn}^{\lambda_n}+I)$, where $I$ is the defining ideal of $B_q(n)$. Moreover the modules $k_\lambda$, $\lambda\in X(n)$, form a complete set of pairwise non-isomorphic simple $B_q(n)$-modules  and the restrictions of these modules to $T_q(n)$ form a complete set of pairwise non-isomorphic simple $T_q(n)$-modules. All $T_q(n)$-modules are completely reducible. An element $\lambda\in X(n)$ is a weight of a $T_q(n)$-module $V$ if it has a submodule isomorphic to $k_\lambda$.

\q Given a subgroup $H$ of a quantum group $G$ over $k$ and an $H$-module $V$ we have the induced $G$-module $\ind_H^GV$. For $\lambda\in X(n)$ the induced module $\ind_{B_q(n)}^{G_q(n)}k_\lambda$ is non-zero if and only if $\lambda\in X^+(n)$. We set $\nabla_q(\lambda)=\ind_{B_q(n)}^{G_q(n)}k_\lambda$, for $\lambda\in X^+(n)$.  The module $\nabla_q(\lambda)$ has a unique irreducible submodule which we denote $L_q(\lambda)$. The modules $L_q(\lambda)$, $\lambda\in X^+(n)$, form a complete set of pairwise non-isomorphic irreducible $G_q(n)$-modules.

\q Suppose $G$ is a quantum group over $k$ and $V$ is a $G$-module with structure map $\tau:V\to V\otimes k[G]$ and basis $v_i$, $i\in I$. The  corresponding coefficient elements $f_{ij}\in k[G]$, are defined by the equations
$$\tau(v_i)=\sum_{j\in I} v_j\otimes f_{ji}.$$
The coefficient space $\cf(V)$ of $V$ is the $k$-span of all $f_{ij}$ (it is independent of the choice of basis).  A $G_q(n)$-module $V$ is polynomial (resp. polynomial of degree $r$) if $\cf(V)\leq A_q(n)$ (resp. $\cf(V)\leq A_q(n,r)$).  A $G_q(n)$-module which is polynomial of degree $r$ may be regarded as an $A_q(n,r)$-comodule and hence as a module for the dual algebra $S_q(n,r)$.  In this way one has equivalences of categories between the category of polynomial $G_q(n)$-modules  of degree $r$, the category of right $A_q(n,r)$-comodules  and the category of left $S_q(n,r)$-modules.

\q Taking $q=1$ one recovers the classical case of the general linear group scheme and its representation theory. We shall write $G(n)$ for $G_q(n)$, write $B(n)$ for $B_q(n)$ and write $T(n)$ for $T_q(n)$ in this case. Further,  the coordinate function $c_{ij}$ will be denoted $x_{ij}$ in this case. 


\q If $q$ is not a root of unity or $k$ has characteristic $0$ and $q=1$,  then all $G_q(n)$-modules are completely reducible. We shall assume from now on that $q$ is a primitive $l$th root of unity, with $l>1$.  There is a Hopf algebra homomorphism $F^\sharp:k[G(n)]\to k[G_q(n)]$ taking $x_{ij}$ to $c_{ij}^l$, for $1\leq i,j\leq n$. The Frobenius morphism $F:G_q(n)\to G(n)$ is the quantum group morphism whose comorphism is $F^\sharp$.  By abuse of notation we also write $F:B_q(n)\to B(n)$ for the restriction of the Frobenius morphism.
The infinitesimal group scheme $G_{\inf,q}(n)$ is the subgroup scheme of $G_q(n)$ whose defining ideal is generated by the elements $c_{ij}^l-\de_{ij}$, $1\leq i,j\leq n$.   For a $G(n)$-module $V$, with structure map $\tau: V\to V\otimes k[G(n)]$ we write $V^F$ for the  vector space $V$ now regarded as a $G_q(n)$-module via the structure map $(\id_V \otimes F^\sharp)\circ\tau :V\to V\otimes k[G_q(n)]$, where $\id_V$ denotes the identity map on $V$.  
We write $X_{\inf,q}(n)$ for the set of column $l$-regular weights, i.e., the set of $\lambda=(\lambda_1,\ldots,\lambda_n)$ such that $0\leq \lambda_1-\lambda_2,\lambda_2-\lambda_3,\ldots,\lambda_{n-1}-\lambda_n,\lambda_n<l$. The modules $L_q(\lambda)$, $\lambda\in X_{\inf,q}(n)$, form a complete set of pairwise non-isomorphic irreducible $G_{\inf,q}(n)$-modules, see \cite{Do2}, Section 3.2.  Moreover, these modules are Schurian, in the sense that $\End_{G_q(n)}(L_q(\lambda))=k$, for $\lambda\in X^+(n)$.

\q To save on notation we shall try to suppress $n$ and $q$ where  possible. We shall write $G$ for $G_q(n)$, write $B$ for $B_q(n)$,  write $G_\inf$ for $G_{\inf,q}(n)$ and write  $X_\inf(n)$ for $X_{\inf,q}(n)$. We shall write $BG_\inf$ for the  thickening of $B$, i.e.,   quantum subgroup of $G$ whose defining ideal is generated by the elements $c_{ij}^l$, $1\leq i<j  \leq n$.  We shall write $\nabla(\lambda)$ for $\nabla_q(\lambda)$ and $L(\lambda)$ for $L_q(\lambda)$, $\lambda\in X^+(n)$. 

\q We shall write $\barG$ for $G(n)$ and $\barB$ for $B(n)$. Moreover, for $\lambda\in X^+(n)$ we shall write $\barnabla(\lambda)$ for $\ind_\barB^\barG k_\lambda$ and $\barL(\lambda)$ for the socle of $\barnabla(\lambda)$. An  element $\lambda$ of $X^+(n)$ may be written uniquely in the form $\lambda=\lambda^0+l\barlambda$, with $\lambda^0\in X_\inf(n)$ and $\barlambda\in X^+(n)$.  Moreover one has $L(\lambda)\cong L(\lambda^0)\otimes \barL(\barlambda)^F$ (Steinberg's tensor product theorem, see \cite{Do2}, Section 3.2).

\q For $\lambda\in X(n)$ we shall write $Z'(\lambda)$ for the induced module $\ind_B^{BG_\inf}  k_\lambda$. The  modules $Z'(\lambda)$ have properties analogous  to the corresponding modules for reductive algebraic groups,  see \cite{Jan}, II. Chapter 3. These modules are considered by Cox in   \cite{Cox}, Section 3.  There is no explicit statement in   \cite{Cox}, Section 3  that these modules are finite dimensional. In a fuller treatment one would include the precise result, namely that the dimension is $l^{{n\choose 2}}$ but we content ourselves here with finite dimensionality.  So we show that $\ind_B^{BG_\inf}M$ is finite dimensional for $M$ a finite dimensional $B$-module.  It suffices to show that the  $G_\inf$-socle  is finite dimensional since $G_\inf$ is a finite quantum group. Hence it suffices to show that 
$\Hom_{G_\inf}(L,\ind_B^{BG_\inf}M)$ is finite dimensional, for $L=L_q(\lambda)$, $\lambda\in X_{\inf,q}(n)$.  Now we have 
\begin{align*}
\Hom_{G_\inf}(L,\ind_B^{BG_\inf}M&)=H^0(G_\inf,L^*\otimes \ind_B^{BG_\inf}M)\cr
&=H^0(G_\inf,\ind_B^{BG_\inf}(L^*\otimes M))
\end{align*}
using the tensor identity.  Hence it suffices to prove that \\
$H^0(G_\inf,\ind_B^{BG_\inf}M)$ is finite dimensional for $M$ finite dimensional.  By left exactness if suffices to prove that  $H^0(G_\inf,\ind_B^{BG_\inf}k_\lambda)=H^0(G_\inf,Z'(\lambda))$ is finite dimensional, for $\lambda\in X(n)$.  Also, for $\lambda=\lambda^0+l\barlambda$, with $\lambda^0\in X_\inf(n)$, $\barlambda\in X(n)$ we have $Z'(\lambda)=Z'(\lambda^0)\otimes k_{l\barlambda}$, by \cite{Cox},  Theorem 3.4 (i). Hence we may assume $\lambda\in X_\inf(n)$.

\q Now by \cite{DoStd},  Proposition 1.5, (i) (or the original source \cite{PW},  Section 2.10), $H^0(G_{\inf},Z'(\lambda))=V^F$, for some $\barB$-module $V$.   We may assume $H^0(G_\inf,Z'(\lambda))$ to be non-zero. Let $U$ be a non-zero   finite dimensional submodule of $V$. Then 
 by Frobenius reciprocity we have
$$\Hom_{BG_\inf}(U^F,Z'(\lambda))=\Hom_B(U^F,k_\lambda),$$
  This is non-zero, (since the left hand side contains the inclusion of $U^F$ in $Z'(\lambda)$). Thus we have 
   $\lambda=0$ and so $\Hom_{BG_\inf}(U^F,Z'(\lambda))=\Hom_B(U^F,k)$.
  Note that Frobenius reciprocity gives an embedding of the  trivial  module in $k$. Let $Z_0$ be a copy of $k$ in $Z'(0)$.  Then we have $\dim \Hom_{BG_\inf}(U^F,Z_0)=\dim \Hom_{BG_\inf}(U^F,Z'(0))$. Hence every homomorphism from $U^F$ into $Z'(0)$ goes into $Z_0$, in particular the inclusion $U^F\to Z'(0)$ has image in $Z_0$. Hence $U^F=Z_0$, and so the $G_\inf$ socle of $Z'(0)$ is $k$ and we are done.

\subsection*{The characteristic $0$ case}

\q We now  consider the case in which $k$ has characteristic $0$.

\bs

\begin{lemma} If $k$ has characteristic $0$ then the  set of decomposition numbers $[\nabla(\lambda):L(\mu)]$, $\lambda,\mu\in X^+(n)$,  is bounded above.
\end{lemma}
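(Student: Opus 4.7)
The plan is to combine three standard ingredients available in the characteristic $0$ quantum setting: the linkage principle, Andersen's translation principle, and Lusztig's character formula (which in this setting is a theorem by the work of Kazhdan--Lusztig and Kashiwara--Tanisaki). The aim is to reduce the uniform bound to a finite calculation parametrised by facets in the affine Weyl group.

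First I would invoke the linkage principle: $[\nabla(\lambda):L(\mu)]\neq 0$ forces $\mu$ to lie in the dot-orbit $\hat W\cdot\lambda$, where $\hat W=W\ltimes lX(n)$ is the affine Weyl group acting on $X(n)$. In particular, $\lambda$ and $\mu$ lie in the same block. Then I would apply Andersen's translation principle (valid in the quantum characteristic $0$ setting): if $\lambda'$ and $\lambda$ lie in the closure of the same $\hat W$-alcove, and similarly for $\mu'$ and $\mu$, with the pair $(\lambda',\mu')$ obtained from $(\lambda,\mu)$ by a common translation by an element of $lX(n)$, then $[\nabla(\lambda):L(\mu)]=[\nabla(\lambda'):L(\mu')]$. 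Equivalently, the decomposition number depends only on the facet data of the pair $(\lambda,\mu)$ modulo the translation lattice $lX(n)$.

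The crucial finiteness observation is then that the set of ordered pairs of facet types modulo translation by $lX(n)$ is finite: a facet is determined by the $W$-component of its representative together with a subset of the affine simple reflections of $\hat W$ recording its stabiliser, and each of these sets is finite. Consequently the set
\[
\{[\nabla(\lambda):L(\mu)]:\lambda,\mu\in X^+(n)\}
\]
is finite, and in particular bounded above. The explicit numerical bound can in principle be extracted from the values at $1$ of the relevant (parabolic) Kazhdan--Lusztig polynomials for $\hat W$ of type $\hat A_{n-1}$ supplied by Lusztig's formula.

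The main obstacle in turning this sketch into a rigorous proof is the use of Andersen's translation principle at walls and singular facets: the regular case is straightforward, but one needs the quantum analogues of the wall-crossing and walkabout results, which in characteristic $0$ rely on Lusztig's conjecture being a theorem. Once the translation principle is in force, the finiteness argument above delivers the uniform upper bound.
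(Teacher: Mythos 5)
There is a genuine gap, and it sits exactly at the step you flag as "the crucial finiteness observation". The translation principle identifies $[\nabla(w\cdot\lambda):L(y\cdot\lambda)]$ with $[\nabla(w\cdot\lambda'):L(y\cdot\lambda')]$ when the base points $\lambda,\lambda'$ lie in the same facet; it does not identify decomposition numbers for different pairs $(w,y)$ of affine Weyl group elements. What the number $[\nabla(\lambda):L(\mu)]$ depends on is therefore not just the pair of facet \emph{types} of $\lambda$ and $\mu$ but their \emph{relative position} in the orbit (the pair $(w,y)$ up to the stabilisers), and this ranges over an infinite set even after modding out by translations. So finiteness of the set of facet types does not yield boundedness of the set of decomposition numbers. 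A sanity check that something must be missing: for the algebraic group $\GL_n(k)$ in characteristic $p$ the identical package (linkage, translation, finitely many facet types modulo $pX(n)$) is available, yet the decomposition numbers there are unbounded for $n\geq 3$. Your sketch never uses the hypothesis that $k$ has characteristic $0$ except to invoke Lusztig's formula, and the claim that the relevant parabolic Kazhdan--Lusztig polynomials evaluated at $1$ are uniformly bounded is itself a nontrivial assertion you would still have to prove.

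The characteristic-$0$-specific input that actually drives the result is the semisimplicity of the classical quotient $\barG=G(n)$ through which the Frobenius factors. The paper exploits this as follows: by transitivity of induction $\nabla(\lambda)=\ind_{BG_\inf}^G Z'(\lambda)$, where $Z'(\lambda)=\ind_B^{BG_\inf}k_\lambda$ has dimension bounded by a constant $N$ independent of $\lambda$ (only finitely many $\lambda^0\in X_\inf(n)$ occur, and twisting by $k_{l\barlambda}$ does not change the dimension). Each $BG_\inf$-composition factor of $Z'(\lambda)$ has the form $L_q(\xi)\otimes k_{l\nu}$, and its induction to $G$ is $L_q(\xi)\otimes\barnabla(\nu)^F$, which is irreducible or zero precisely because $\barnabla(\nu)=\barL(\nu)$ in characteristic $0$ (plus Steinberg's tensor product theorem). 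Left exactness of induction then gives $[\nabla(\lambda):L(\mu)]\leq N$ at once. If you want to stay closer to your alcove-geometric picture, the correct substitute for your finiteness claim is Cox's theorem that each block of $S_q(n,r)$ in characteristic $0$ contains at most $n!$ weights, which again rests on the semisimplicity of $\barG$-modules; without some such input the argument cannot close.
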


\begin{proof} Let $N$ be greater or equal to the dimension of $\dim Z'(\lambda)$, for all  $\lambda\in X_\inf(n)$. For $\lambda=\lambda^0+l\barlambda\in X(n)$ with $\lambda^0\in X_\inf(n)$, $\barlambda\in X(n)$ we have $Z'(\lambda)=Z'(\lambda)\otimes k_{l\barlambda}$, by \cite{Cox}, Theorem 3.4, (i) so that $N$ is an upper bound for all $\dim Z'(\lambda)$, $\lambda\in X(n)$. 

\q Let $\lambda\in X^+(n)$.  We have 
$$\nabla(\lambda)=\ind_B^G k_\lambda=\ind_{BG_\inf}^G\ind_B^{BG_\inf} k_\lambda=\ind_{BG_\inf}^G Z'(\lambda)$$
 by the transitivity of induction.   Let $0=X_0<X_1<\cdots < X_t=Z'(\lambda)$ be a $BG_\inf$-composition series. Thus $t\leq N$.  By left exactness of induction we have
 $$[\nabla(\lambda):L(\mu)]\leq \sum_{i=1}^t [\ind_B^{BG_\inf} X_i/X_{i-1}: L(\mu)].$$
 A factor $X_i/X_{i-1}$ has the form $L_q(\xi)\otimes k_{l\nu}$ for some $\xi\in X_\inf(n)$, $\nu\in X(n)$.  Thus we have
 $$\ind_{BG_\inf}^G X_i/X_{i-1}=L_q(\xi)\otimes  \ind_{BG_\inf}^G k_{l\nu}=L_q(\xi)\otimes(\ind_\barB^\barG k_\nu)^F$$ 
 by \cite{Cox}, Lemma 4.6. Now $\ind_\barB^\barG k_\nu=\barnabla(\nu)$ for $\nu\in X^+(n)$ and is otherwise $0$. Moreover $\barnabla(\nu)=\barL(\nu)$, for $\nu\in X^+(n)$, since $\barG$-modules are completely reducible.  Thus by Steinberg's tensor product theorem, $\ind_{BG_\inf}^G X_i/X_{i-1}$ is either irreducible or $0$. Hence we have $[\nabla(\lambda):L(\mu)]\leq t\leq N$.
  This completes the proof of the Lemma.

\end{proof}

\begin{proposition} For fixed $n$, a field $k$ of characteristic $0$ and $0\neq q\in k$ the representation dimension of the Schur algebra $S_q(n,r)$ (as $r$ varies) is bounded above.

\end{proposition}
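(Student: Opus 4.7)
The plan is to bound the Loewy length of $S_q(n,r)$ uniformly in $r$ and then invoke Auslander's classical inequality that the representation dimension of a finite-dimensional algebra is at most its Loewy length.

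First I refine Lemma 5.1 by reading off not just the multiplicity bound but the actual list of composition factors. In the proof of that lemma one constructs, for each $\lambda\in\Lambda^+(n,r)$, a $BG_\inf$-composition series of $Z'(\lambda)$ of length at most $N$ with quotients of the form $L_q(\xi)\otimes k_{l\eta}$; induction to $G$ sends each such quotient either to $0$ or to the simple $G$-module $L(\xi+l\eta)$, using Cox's identity $Z'(\lambda)=Z'(\lambda^0)\otimes k_{l\barlambda}$, the semisimplicity of $\barG$-modules in characteristic $0$, and Steinberg's tensor product theorem. Here $N$ is a uniform upper bound for $\dim Z'(\xi)$ as $\xi$ ranges over the finite set $X_\inf(n)$, explicitly $N=l^{n(n-1)/2}$. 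Consequently $\nabla(\lambda)$ has composition length at most $N$, its composition factors are of the form $L(\xi+l(\eta+\barlambda))$ where $(\xi,\eta)$ ranges over the $BG_\inf$-composition factors of $Z'(\lambda^0)$, and in particular its Loewy length is at most $N$. By contravariant duality the same bounds hold for $\Delta(\lambda)$.

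Next I bound the number of sections in a standard filtration of each projective indecomposable $P(\lambda)$ of $S_q(n,r)$. By BGG reciprocity for quasi-hereditary algebras, $(P(\lambda):\Delta(\mu))=[\nabla(\mu):L(\lambda)]\le N$. Writing $\lambda=\lambda^0+l\barlambda$ and $\mu=\mu^0+l\barmu$, the description above forces $\lambda^0=\xi$ and $\barlambda=\eta+\barmu$ for some pair $(\xi,\eta)$ drawn from the finite list of $BG_\inf$-composition factors of $Z'(\mu^0)$. For fixed $\lambda$ this leaves at most $|X_\inf(n)|=l^n$ choices for $\mu^0$, and for each $\mu^0$ at most $N$ choices of $(\xi,\eta)$, which then determine $\barmu=\barlambda-\eta$. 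Hence the total number of $\Delta$-sections in $P(\lambda)$ is at most $l^nN^2$, a bound independent of $r$.

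Combining the two estimates, the Loewy length of $P(\lambda)$ is at most $N\cdot l^nN^2=l^nN^3$, so the Loewy length of $S_q(n,r)$ is at most $l^nN^3$ uniformly in $r$, and Auslander's bound yields the desired uniform bound on representation dimension. The main obstacle is the bookkeeping of the second paragraph: both the restriction of $\mu^0$ to the finite set $X_\inf(n)$ and the determination of $\barmu$ from the finite data attached to $Z'(\mu^0)$ rely essentially on characteristic $0$, since that is what makes $\barG$ semisimple and allows Cox's lemma to describe $\ind_{BG_\inf}^G k_{l\nu}$ as the irreducible $\barL(\nu)^F$ rather than as some $\barnabla(\nu)^F$ with many composition factors. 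It is precisely this input that fails in positive characteristic, consistent with the contrasting conclusion of Theorem 5.3.
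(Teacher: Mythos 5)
Your argument is correct, but it takes a genuinely different route from the paper. The paper never controls Loewy length: it splits $S_q(n,r)$ into blocks, passes to the basic algebra $A'$ of each block, and invokes Iyama's theorem that the representation dimension of a finite dimensional algebra is bounded by a function of its $k$-dimension alone; the uniform bound on $\dim A'$ then comes from Cox's theorem that a block of $S_q(n,r)$ has at most $n!$ simple modules, combined with BGG reciprocity and the decomposition number bound $N$ of Lemma 5.1 (the sum $\sum_\tau[\nabla(\tau):L(\lambda)][\nabla(\tau):L(\mu)]$ has at most $|\Theta|\le n!$ nonzero terms). You instead bound the Loewy length of every $P(\lambda)$ uniformly in $r$ and apply Auslander's generator--cogenerator bound. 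The extra bookkeeping you supply --- that $\nabla(\lambda)$ has composition \emph{length} (not merely composition multiplicities) at most $N$, and that the set of $\mu$ with $(P(\lambda):\Delta(\mu))=[\nabla(\mu):L(\lambda)]\neq 0$ has size at most $l^nN$ because $\mu^0$ ranges over the finite set $X_\inf(n)$ while $\barmu=\barlambda-\eta$ is pinned down by the finitely many $BG_\inf$-composition factors of $Z'(\mu^0)$ --- is exactly what substitutes for Cox's block-theoretic input, and it is sound; both refinements do follow from the same induction argument used in the paper's proof of Lemma 5.1. What each approach buys: yours is more self-contained (no appeal to Iyama's theorem or to Cox's Theorem 5.3 on blocks) and yields an explicit numerical bound, roughly $l^nN^3$ with $N=l^{\binom{n}{2}}$; the paper's is shorter because Iyama's theorem absorbs all structure of $A'$ beyond its dimension. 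One small correction: Auslander's inequality is $\mathrm{repdim}(A)\le LL(A)+1$, not $\le LL(A)$; this off-by-one is harmless for the boundedness statement.
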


\begin{proof}  If $A_1$ and $A_2$ are finite dimensional algebras then the representation dimension of $A_1 \times A_2$ is the maximum of the representation dimensions of $A_1$ and $A_2$. Hence it suffices to prove that there exists a constant $K$ such that the representation dimension of every block of $S_q(n,r)$  is less than  $K$.  So let $A$ be a block ideal of $S_q(n,r)$.  Let $\Theta$ be the subset of $\Lambda^+(n,r)$ such that  the simple  $S_q(n,r)$ modules belonging to the block $A$ are the modules   $L(\lambda)$, $\lambda\in \Theta$.  Let $P(\lambda)$ be the projective cover of $L(\lambda)$. Then the block $A$ is Morita equivalent to its basic algebra $A'=\End_{S_q(n,r)}(\bigoplus_{\lambda\in \Theta} P(\lambda))^\opp$.  Representation dimension is a Morita invariant so that it suffices to prove that there exists a constant $K$ such that the representation dimension of $A'$ is less than $K$, for all blocks $A$.  But now, from Iyama's result, \cite{I}, 1.2 Corollary, the representation dimension of a finite dimensional $k$-algebra $R$ is finite and bounded by a function of the  dimension of $R$ as a $k$-vector space.  Thus it suffices to prove that there exists  a uniform bound on the dimension of $A'$ as $A$ ranges over all blocks of $S_q(n,r)$ (as $r$ varies).

\q Now as a $k$-vector space $A'=\End_{G_q(n)}(\bigoplus_{\lambda\in \Theta} P(\lambda))^\opp$ is isomorphic to $\bigoplus_{\lambda,\mu\in\Theta}\Hom_{G_q(n)}(P(\lambda),P(\mu))$. Hence we have $\dim A'\leq |\Theta|^2 M$, where $M$ is the maximal dimension of the spaces $\Hom_G(P(\lambda),P(\mu))$, as $\lambda,\mu$ vary over $\Theta$. 
Moreover by a result of Cox, \cite{Cox},  Theorem 5.3, we have $|\Theta|\leq |S_n|=n!$. Hence it suffices to prove that there is a uniform bound on $\dim \Hom_G(P(\lambda),P(\mu))$, as $\lambda,\mu$ vary over $\Lambda^+(n)$.  However,  for $r\geq 0$ the algebra $S_q(n,r)$ is a quasi-heredity algebra with standard modules $\Delta(\lambda)$, $\lambda\in \Lambda$ (and the dominance order on $\Lambda^+(n,r)$).  The costandard modules are the induced modules $\nabla(\lambda)$, $\lambda\in \Lambda^+(n,r)$,  and for each $\lambda\in \Lambda^+(n,r)$ the modules $\Delta(\lambda)$ and $\nabla(\lambda)$ have the same composition factors,  counting multiplicities, see e.g., \cite{DoStd}, Section 4.  Hence we have
\begin{align*}
\dim\, &\Hom_{S_q(n,r)}(P(\lambda),P(\mu))\cr
&=[P(\mu):L(\lambda)]\cr
&=\sum_{\tau\in\Lambda^+(n,r)} (P(\lambda):\Delta(\tau))[\Delta(\tau):L(\mu)]\cr
&=\sum_{\tau\in \Lambda^+(n,r)} [\nabla(\tau):L(\lambda)][\Delta(\tau):L(\mu)]\cr
&=\sum_{\tau\in \Lambda^+(n,r)} [\nabla(\tau):L(\lambda)][\nabla(\tau):L(\mu)]\cr
&\leq |\Theta| N^2\leq n! N^2
\end{align*}
where $N$ is an upper bound for all decomposition numbers $[\nabla(\xi):L(\nu)]$, for $\xi,\nu\in X^+(n)$ (and the existence of such an $N$ is guaranteed by the Lemma above). This completes the proof.

\end{proof}

\subsection*{The positive characteristic case}

\q Now suppose that $k$ has characteristic $p>0$.   We claim that the main development  of Sections 1-4 above goes through in this case.

\q  In addition to the generalities  on quantum general linear groups discussed in Section 5.1, we shall need the $q$-analogues of various results described above in the classical case. Our reference for the background results is \cite{Do2}.  Let $m\geq 1$. In this section we shall write $G_m'$ for the (quantum) subgroup scheme of $G$ with defining ideal generated by $c_{ij}^{lp^m}-\de_{ij}$, $1\leq i,j\leq n$. In this section $X_m(n)$ is the set of all $\lambda\in X^+(n)$ expressible in the form $\lambda=\lambda^0+p\barlambda$ with $\lambda^0$ an $l$ column regular weight and $\barlambda$ a $p^m$ column regular weight. 
The modules $L(\lambda)$, $\lambda\in X_m(n)$,  form a complete set of pairwise non-isomorphic $G_m'$-modules (this follows from \cite{Cox}, Lemma 3.1 and Steinberg's tensor product theorem).  The arguments of Section 3 easily adapt to the present context, in particular one may prove a suitable version of Lemma 3.2  (we leave the details to the interested reader).

\q We pick $m\geq 1$ such that $P=p^m>n$. Let $r$ be a positive integer.  If $r\geq (lP-1)|\delta|+1$ we  write $r=(lP-1)|\delta|+1+u_{-1}+lPs$, with $0\leq u_{-1}\leq lP-1$.  Let $h$ be a positive integer. Suppose that $s$ is a positive integer which is large enough so that 
 $s > ((P-1)|\delta|+1)\frac{P^h-1}{P-1}$.  We write
$$s- ((P-1)|\delta|+1)\frac{P^h-1}{P-1}=\sum_{i=0}^{h-1}P^iu_i+P^hu_h$$
with $0\leq u_i\leq P-1$, for $0\leq i\leq h-1$ and $u_h\geq 0$.  We define 
$$\lambda^i=
\begin{cases} (1+u_i)\ep_i,  &{\rm if}\q  u_i<P-1;\cr
(P-n)\ep_1+\omega, &{\rm if}\q  u_i= P-1
\end{cases}
$$
and $\gamma=u_h\ep_1$. 

\q Then $\mu=((lP-1)\delta+w_0\lambda_{-1})+lP\sum_{i=0}^{h-1}((P-1)|\delta|+w_0\lambda^i)+lP^{h+1}\gamma$ belongs to $\Lambda^+(n,r)$. By the arguments above (using suitable references to \cite{Do2}), one has that $\End_{G_q(n)}(I(\mu))$ is a tensor product of at least $h+1$ copies of $k[x]/(x^n)$.  Hence we obtain the following result.

\bigskip

\begin{theorem} Suppose $k$ is a field of characteristic $p>0$ and $q$ is a primitive $l$th root of unity, with $l>1$.
Choose $m\geq 1$ such that $P=p^m> n$. Let $h$ be a positive integer. If $r$ is a positive integer large enough so that 
 $$r\geq  ((lP-1)|\delta|+1)+ lP((P-1)|\delta|+1)\frac{P^h-1}{P-1}$$
 then $S_q(n,r)$ has representation dimension at least $h+2$.

\end{theorem}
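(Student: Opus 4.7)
The plan is to run the argument of Theorem 4.3 and Corollary 4.4 inside the quantum framework of Section 5.1, with the classical Frobenius $F:\barG\to\barG$ replaced by the composite of the quantum Frobenius $F:G_q(n)\to\barG$ and iterates of the classical Frobenius on $\barG$. Under this replacement the combined restricted/Frobenius decomposition of a weight $\mu\in\Lambda^+(n,r)$ acquires one extra ``quantum'' layer at the bottom, corresponding to the top restricted weight $(lP-1)\delta$ for the enlarged infinitesimal subgroup $G_m'$; this extra layer is what produces the improvement from $h+1$ to $h+2$ in the final bound.

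First I would collect the $q$-analogues of the structural results of Sections 3 and 4. The $5$-term exact sequence for the normal embedding $G_m'\triangleleft G_q(n)$, combined with Steinberg's tensor product theorem for $G_q(n)$ (see \cite{Do2}, Section 3.2), yields verbatim translations of Lemmas 3.1, 3.2, 3.4, Proposition 3.8 and Corollary 3.9 in the present context. One also needs the $q$-analogue of Lemma 3.6 for the quantum infinitesimal subgroup $G_\inf$: for $\lambda\in X_\inf(n)$ with $b(\lambda)<l$, the module $I((l-1)\delta+w_0\lambda)$ equals the tilting module $M((l-1)\delta+\lambda)$, restricts to the injective hull of $L(\lambda)$ as a $G_\inf$-module, and has endomorphism algebra of dimension $|W\lambda|$. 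Combining this base case with classical Proposition 4.2 applied to the outer classical Frobenius layers yields the full $q$-analogue of Proposition 4.2: the module $I((lP-1)\delta+a\ep_n)$ is $G_m'$-injective for $0\leq a\leq lP-1$, and $I((lP-1)\delta+a\ep_n+\omega)\cong I((lP-1)\delta+a\ep_n)\otimes D$ for $0\leq a<lP-1$. Likewise the $q$-analogue of Lemma 4.1(ii) gives $\End_{G_q(n)}(I((lP-1)\delta+a\ep_n))\cong k[x]/(x^n)$ for $1\leq a<lP$.

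With these tools in place, I would take
$$\mu = \bigl((lP-1)\delta+w_0\lambda^{-1}\bigr)+lP\sum_{i=0}^{h-1}P^i\bigl((P-1)\delta+w_0\lambda^i\bigr)+lP^{h+1}\gamma\in\Lambda^+(n,r),$$
and mimic the paragraph preceding Theorem 4.3 to factor
$$I(\mu)\cong I((lP-1)\delta+w_0\lambda^{-1})\otimes\bigotimes_{i=0}^{h-1} I((P-1)\delta+w_0\lambda^i)^{F_i}\otimes I(\gamma)^{F_h},$$
where $F_i$ denotes the composite of the quantum Frobenius with $i$ iterations of the classical Frobenius on $\barG$. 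Each of the $h+1$ tensorands indexed by $-1,0,\ldots,h-1$ is admissible of index $1$, with endomorphism algebra $k[x]/(x^n)$, while $I(\gamma)$ is a costandard module and hence has trivial endomorphism algebra. The $q$-version of Lemma 3.4(iii) then gives
$$\End_{G_q(n)}(I(\mu))\cong k[x_1,\ldots,x_{h+1}]/(x_1^n,\ldots,x_{h+1}^n).$$
Taking the contravariant dual of $I(\mu)$ produces a projective $S_q(n,r)$-module with isomorphic endomorphism algebra, and \cite{MO}, Corollary 3.3 then yields the lower bound $h+2$ on the representation dimension of $S_q(n,r)$.

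The main obstacle will be the verification of the $q$-analogue of Lemma 3.6(iv), namely the symmetry of $\End_{G_q(n)}(M((l-1)\delta+\lambda))$: the classical proof reduced to the restricted enveloping algebra of ${\rm sl}_n(k)$ being symmetric, whereas in the present setting the analogous role is played by the Frobenius--Lusztig kernel (the small quantum group of type $A_{n-1}$), which is known to be a symmetric algebra, so the argument transfers without essential change. Modulo these verifications, which are essentially bookkeeping rather than mathematical novelty, the result follows.
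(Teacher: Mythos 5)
Your proposal matches the paper's own (rather more sketchy) argument essentially verbatim: the same $q$-analogues of the Section 3 lemmas, the same weight $\mu=((lP-1)\delta+w_0\lambda^{-1})+lP\sum_{i=0}^{h-1}P^i((P-1)\delta+w_0\lambda^i)+lP^{h+1}\gamma$ with the extra quantum--infinitesimal bottom layer supplying the $(h+1)$st tensor factor, and the same appeal to contravariant duality and \cite{MO}, Corollary 3.3. The only quibble is that $\End_{G_q(n)}(I((lP-1)\delta+a\ep_n))$ need not be exactly $k[x]/(x^n)$ for all $1\leq a<lP$ (it is a tensor product of one copy of $k[x]/(x^n)$ per nonzero digit of $a$ in the mixed $l,p$-adic expansion), so each factor is admissible of positive index rather than index exactly $1$; this only strengthens the lower bound and does not affect the conclusion.
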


\bigskip


\begin{thebibliography}{99}



\bibitem{AJS} {H.H. Andersen, J.-C. Jantzen and W. Soergel, \emph{Representations of quantum groups at a $p$th root of unity and semisimple groups in characteristic $p$: independence of $p$},  Ast\'erisque  220 (1994), 1-321.}

\bibitem{Bergh} {P.A. Bergh, \emph{Representation dimension and finitely generated cohomology}, Adv. Math. 219, (2008), 389-400.}


\bibitem{Cox}{A. G. Cox, \emph{The blocks of the $q$-Schur algebra}, Journal of Algebra, 207, (1998), 306-325}



\bibitem{Do1} {Stephen Donkin, \emph{On Schur Algebras and Related Algebras II}, Journal of Algebra 111, (1987) 354-364.}


\bibitem{DoTiltZeit}  {Stephen Donkin, \emph{On tilting modules for algebraic groups}, Math. Z. , 212, (1993), 39-60.}

\bibitem{DoStd} {Stephen Donkin, \emph{Standard Homological Properties for Quantum ${\rm GL}_n$},  Journal of Algebra 181, (1996), 235-266}

\bibitem{Do2} {Stephen Donkin, \emph{The $q$-Schur algebra}, LMS Lecture Notes in Mathematics 253, Cambridge University Press 1998.}

\bibitem{Do3}{Stephen Donkin, \emph{Tilting modules for algebraic groups and finite dimension algebras}, in A Handbook of Tilting Theory, pp215-257,  Edited by  L. H\"ugel, D. Happel, H. Krause,  London Mathematical Society Lecture Note Series 312, 2007.}

\bibitem{DeVisscherDonkin}  {Maud De Visscher and Stephen Donkin, \emph{On projective and injective polynomial modules},  Math.Z. 251 (2005), 333-358.}


\bibitem{Green}  {J. A. Green,  \emph{Polynomial representations of  ${\rm GL}_n$}, Lecture Notes in Mathematics  830, Springer 1980, Berlin/Heidelberg/New York.}


\bibitem{Hum} {J.E. Humphreys, \emph{Symmetry for finite dimensional Hopf algebras},   Proc.  Amer. Math. Soc., 88, (1978),  143-146.}

\bibitem{I}{O. Iyama, \emph{Finiteness of representation dimension}, Proc. Amer.  Math.  Soc.  131, (2002),  1011-1014. }


\bibitem{Jan}{Jens Carsten Jantzen, \emph{Representations of Algebraic Groups}, second ed., Math. Surveys Monogr., vol 107, Amer. Math. Society., 2003.}


\bibitem{MO} {Vanessa Miemietz and Steffen Oppermann,  \emph{On the representation dimension of Schur algebras}, Algebra and Represent. Theory  14, 2011, 283-300.}


\bibitem{Oppermann} {S. Oppermann, \emph{Lower bounds for Auslander's representation dimension}, Duke Math. J. 148, (2009), 211-249.}

\bibitem{PW} {B. Parshall and Jian-pan Wang, \emph{Quantum linear groups}, Mem. Amer. Math. Soc., 439. (1991).}

\bibitem{Rouquier1} {R. Rouquier, \emph{Representation dimension of exterior algebras}, Invent. Math. 165, (2006), 357-367.}

\bibitem{Rouquier2}  {R. Rouquier, \emph{Dimensions of triangulated categories}, J. K-Theory, 1, (2008), 193-256.}













\end{thebibliography}
\end{document}